\newtheorem{theo}{Theorem}[section]
\newtheorem{cor}[theo]{Corollary}
\newtheorem{lemma}[theo]{Lemma}
\newtheorem{defi}[theo]{Definition}
\newtheorem{rem}[theo]{Remark}
\newtheorem{example}[theo]{Example}
\newtheorem*{main}{Main Result}
\newcommand{\End}{\operatorname{End}}
\newcommand{\Hom}{\operatorname{Hom}}
\renewcommand{\Im}{\operatorname{Im}}
\newcommand{\Ker}{\operatorname{Ker}}
\newcommand{\Coker}{\operatorname{Cok}\nolimits}
\newcommand{\id}{\operatorname{id}}
\newcommand{\Cone}{\operatorname{Cone}}
\renewcommand{\mod}{\mbox{-}\mathrm{mod}}
\newcommand{\smod}{\mbox{-}\underline{\mathrm{mod}}}
\newcommand{\proj}{\mbox{-}\mathrm{proj}}
\newcommand{\perf}{\mathrm{perf}}
\newcommand{\CN}{\mathcal{CN}}
\newcommand{\hExt}{\operatorname{\widehat{Ext}}}
\newcommand{\Hh}{\operatorname{H}\nolimits}
\newcommand{\hHh}{\operatorname{\widehat{H}}\nolimits}
\newcommand{\hHH}{\operatorname{\widehat{HH}}}
\newcommand{\gldim}{\operatorname{gl.dim}}
\newcommand{\injdim}{\operatorname{inj.dim}}
\newcommand{\projdim}{\operatorname{proj.dim}}
\renewcommand{\L}{\Lambda}
\newcommand{\G}{\Gamma}
\newcommand{\Le}{\Lambda^\textrm{e}}
\newcommand{\Ge}{\Gamma^\textrm{e}}
\newcommand{\D}{\mathcal{D}}
\newcommand{\K}{\mathcal{K}}
\numberwithin{equation}{section}
\begin{document}
\title[Characterization of eventually periodic modules]{Characterization of eventually periodic modules in the singularity categories}

\author[S. Usui]{Satoshi Usui}
\address{%
Graduate School of Science, Department of Mathematics,
Tokyo University of Science, 
1-3, Kagurazaka, Shinjuku-ku, 
Tokyo, 162-8601,
Japan}
\email{usui.satoshi.g@gmail.com} 


\thanks{}

\subjclass[2020]
{16E05, 
18G90, 
16G10} 

\keywords{singularity category, eventually periodic module, singular equivalence of Morita type with level}

\date{}

\dedicatory{}



\maketitle

\begin{abstract}
The singularity category of a ring makes only the modules of finite projective dimension vanish among the modules, so that the singularity category is expected to characterize a homological property of modules of infinite projective dimension.
In this paper, among such modules, we deal with eventually periodic modules over a left artin ring, and, as our main result, we characterize them in terms of morphisms in the singularity category.
As applications, we first prove that, for the class of finite dimensional algebras over a field, being eventually periodic is preserved under singular equivalence of Morita type with level.
Moreover, we determine which finite dimensional connected Nakayama algebras are eventually periodic when the ground field is algebraically closed.
\end{abstract}


\section*{Introduction} \label{intro}
Throughout this paper, let $k$ be a field, and we assume that all rings are associative and unital. 
By a module, we mean a left module.

The {\it singularity category} $\D_{\rm sg}(R)$ of a left Noetherian ring $R$ is defined to be the Verdier quotient of the bounded derived category of finitely generated $R$-modules by the full subcategory of perfect complexes (see \cite{Buch86,Orlov04}), and it  provides a homological measure of singularity of $R$ in the following sense: the global dimension of $R$ is finite if and only if $\D_{\rm sg}(R) = 0$. 
This follows from the fact that a finitely generated $R$-module has finite projective dimension if and only if it is isomorphic to $0$ in $\D_{\rm sg}(R)$. 
In view of these facts, it is expected that the singularity categories capture homological properties of rings of infinite global dimension and modules of infinite projective dimension.

This paper is devoted to studying modules with infinite projective dimension, and we focus on {\it eventually periodic modules} over a left artin ring, that is, finitely generated modules whose minimal projective resolutions have infinite length and become periodic from some step. 
Remark that eventually periodic modules in our sense have infinite projective dimension, while those in the other papers are not necessarily of infinite projective dimension (see Remark \ref{rem_1}). 
The eventual periodicity of modules has been studied for a long time 
(see \cite{Avramov-Gasharov-Peeva_1989,Bergh_2006,Croll_2013,Eisenbud_1980,kupper2010two,Usui21_No02}).
In \cite{Usui21_No02}, the author investigated eventually periodic modules  over a finite dimensional Gorenstein algebra $\L$ over an algebraically closed field. 
It was proved in \cite[Proposition 3.4]{Usui21_No02} that a finitely generated $\L$-module $M$ is eventually periodic if and only if there exists an invertible homogeneous element of positive degree in the {\it Tate cohomology ring} of $M$, which is the $\mathbb{Z}$-graded ring  whose $i$-th component is given by the morphism space in $\D_{\rm sg}(\L)$ from $M$ to $M[i]$, where the multiplication is given by the Yoneda product.
Hence the following question naturally occurs.
Does the same characterization of eventually periodic modules as above hold for more general rings?

There are three aims of this paper.
Our first aim is to prove our main result, which gives an affirmative answer to the above question.
More concretely, we characterize the eventual periodicity of modules over a left artin ring as in \cite[Proposition 3.4]{Usui21_No02}:

%
%
\begin{main}[Corollary \ref{claim_13_1}] \label{intro_1} 
Let $R$ be a left artin ring. 
Then the following conditions are equivalent for a finitely generated $R$-module $M$.
\begin{enumerate}
    \item $M$ is eventually periodic.
    \item The projective dimension of $M$ is infinite, and the Tate cohomology ring of $M$  has an invertible homogeneous element of positive degree.  
\end{enumerate}
In this case, there exists an invertible homogeneous element in the Tate cohomology ring of $M$ whose degree equals the period of some periodic syzygy $\Omega_R^{n}(M)$ with $n \geq 0$.
\end{main}

We remark that the degree of the invertible element in the last statement of the main result does not depend on the choice of a periodic syzygy, because all the periodic syzygies of an eventually periodic module have the same period.
The main result will be obtained from the result characterizing the existence of  invertible homogeneous elements in  Tate cohomology rings over a left Noetherian ring (see Theorem \ref{claim_13}).

Next, we work with finite dimensional eventually periodic $k$-algebras, that is, finite dimensional $k$-algebras $\L$ that are eventually periodic as $(\L \otimes_k \L^{\rm op})$-modules. 
Then, as our second aim, we show that, for the class of finite dimensional $k$-algebras, being eventually periodic is invariant under {\it singular equivalence of Morita type with level} (see Theorem \ref{claim_15}).
The notion of singular equivalences of Morita type with level  was introduced by Wang \cite{Wang_2015}, and it is known that  such equivalences induce triangle equivalences between singularity categories (see Remark \ref{rem_2}). 
Recently, singular equivalences of Morita type with level have been intensively studied (see \cite{Chen-Liu-Wang_2020,Dalezios_2021,Qin_2021,Skart_2016,Wang_2015}).
In particular, Skarts\ae terhagen \cite{Skart_2016}, Qin \cite{Qin_2021} and Wang \cite{Wang_2015,Wang_2019} have discovered invariants under singular equivalence of Morita type with level.

Our last aim is to give a necessary and sufficient condition for a finite dimensional connected Nakayama $k$-algebra to be eventually periodic when the ground field $k$ is algebraically closed (see Corollary \ref{claim_30}).
The key point is to classify such algebras up to singular equivalence of Morita type with level. 

This paper is organized as follows. 
In Section 1, we recall basic terminology and facts which are used in this paper.
In Section 2, we prove our main result.
In Section 3, we apply our main result to prove that being eventually periodic is preserved under singular equivalence of Morita type with level and to give a criterion for a Nakayama algebra to be eventually periodic.


\section{Preliminaries} \label{Preliminaries}

For a ring $R$, we denote by $R\mod$ the category of finitely generated  $R$-modules, by  $R\proj$ the full subcategory of $R\mod$ consisting of finitely generated projective $R$-modules and by $\gldim R$ the global dimension of $R$. 
For an $R$-module $M$, we denote by $\projdim_{R}M$ (resp.\ $\injdim_{R}M$) the projective (resp.\ injective) dimension of $M$. 
For an algebra $\L$ over a commutative ring $S$, we denote by $\Le$ its enveloping algebra $\L \otimes_{S} \L^{\rm op}$, where $\L^{\rm op}$ is the opposite algebra of $\L$.
Then $\Le$-modules can be identified with $\L$-bimodules (on which $k$ acts centrally).

\subsection{Complexes}
In this subsection, we recall basic terminology related to complexes.
Let $R$ be a ring, and let $X_\bullet$ and $Y_\bullet$ be (chain) complexes of $R$-modules, where $X_\bullet$ is of the form
\[
\cdots \rightarrow X_{i+1} \xrightarrow{d_{i+1}^{X}} X_{i}  \xrightarrow{d_{i}^{X}} X_{i-1} \rightarrow \cdots.
\]
For each integer $i$, we denote by $\pi_i=\pi_i^{X}$ the canonical epimorphism $X_i \rightarrow \Omega_{i}(X_\bullet):= \Coker d^X_{i+1}$.
The {\it $i$-th shift} $X_\bullet[i]$ of $X_\bullet$ is the complex given by $(X_\bullet[i])_j = X_{j-i}$ and  $d^{X[i]} = (-1)^{i}d^{X}$.
We say that $X_\bullet$ is {\it left ({\rm resp.}\ right) bounded} if $X_i = 0$ for all $i \gg 0$ (resp.\ $i \ll 0$) and that $X_\bullet$ is {\it bounded} if it is both left and right bounded.
A {\it chain map} from $X_\bullet$ to $Y_\bullet$ is a family $(\varphi_i:X_i\rightarrow Y_i)_{i\in \mathbb{Z}}$ of morphisms of $R$-modules such that $d^{Y} \varphi = \varphi\,d^X$.
Let $\varphi$ and $\psi$ be two chain maps from $X_\bullet$ to $Y_\bullet$.
We say that the two chain maps $\varphi$ and $\psi$ are {\it homotopic}, denoted by $\varphi \sim \psi$, if there exists a family $(\sigma_i:X_i\rightarrow Y_{i+1})_{i\in \mathbb{Z}}$ of morphisms of $R$-modules such that $\varphi- \psi = d^Y \sigma + \sigma d^X$. 
Such a family is called  a {\it homotopy from $\varphi$ to $\psi$}.
The chain map $\varphi: X_\bullet \rightarrow Y_\bullet$ is called a {\it homotopy equivalence} if there exists a chain map $\psi:Y_\bullet \rightarrow X_\bullet$ such that $\psi\varphi\sim\id_X$ and $\varphi\psi\sim\id_Y$.
The {\it  cone} $\Cone(\varphi)_\bullet$ of $\varphi: X_\bullet \rightarrow Y_\bullet$ is the complex given by $\Cone(\varphi)_i:=Y_i \oplus X_{i-1}$ and 
\[ d^{\Cone(\varphi)}_i := \begin{pmatrix}d^{Y}_{i}&\varphi_{i-1}\\0&-d^{X}_{i-1}\end{pmatrix}:Y_i \oplus X_{i-1} \rightarrow Y_{i-1} \oplus X_{i-2},\]
where we view an element of $Y_i \oplus X_{i-1}$ as a column vector.
The complex $X_\bullet$ is called {\it contractible} if $\id_X \sim 0$. 
It is well-known that a chain map is homotopy equivalent if and only if its  cone is contractible. 
The {\it $i$-th homology group} of $X_\bullet$ is the quotient $\Hh_i(X_\bullet) := \Ker d^X_i/\Im d^X_{i+1}$.
We call $\varphi:X_\bullet \rightarrow Y_\bullet$ a {\it quasi-isomorphism} if the induced morphism $\Hh_i(\varphi):\Hh_i(X_\bullet)\rightarrow \Hh_i(Y_\bullet)$ is an isomorphism for all integers $i$.

\subsection{Singularity categories} \label{Preliminaries_2}
In the rest of Section \ref{Preliminaries}, let $R$ be a left Noetherian ring.

In this subsection, we review some facts on singularity categories from \cite{Buch86,Neeman_Book,Zimme14Book}.
Let $\D^{\rm b}(R\mod)$ be the bounded derived category of $R\mod$. 
It is well-known that there exists a triangle equivalence   $\K^{\rm -, b}(R\proj)\rightarrow\D^{\rm b}(R\mod)$, where $\K^{\rm -, b}(R\proj)$ stands for the homotopy category of right bounded complexes of finitely generated projective $R$-modules with bounded homology (see \cite{Zimme14Book}).
Recall that an object $X_\bullet$ of $\D^{\rm b}(R\mod)$ is called {\it perfect} if it is isomorphic in $\D^{\rm b}(R\mod)$ to a bounded complex of finitely generated projective $R$-modules.
The full subcategory $\perf(R)$ of $\D^{\rm b}(R\mod)$ consisting of perfect complexes becomes a thick subcategory, that is, a triangulated subcategory that is closed under direct summands (see \cite[Section 1.2]{Buch86}).
The triangle equivalence $\K^{\rm -, b}(R\proj) \rightarrow \D^{\rm b}(R\mod)$ restricts to a triangle equivalence $\K^{\rm b}(R\proj) \rightarrow \perf(R)$, where $\K^{\rm b}(R\proj)$ denotes the homotopy category of bounded complexes of finitely generated  projective $R$-modules.
Following \cite{Orlov04}, we call the Verdier quotient  $\D_{\rm sg}(R) := \D^{\rm b}(R\mod)/\perf(R)$ the {\it singularity category} of $R$ (cf.\ \cite{Buch86}).
Recall that the shift functors $[1]$ on $\D^{\rm b}(R\mod)$ and $\D_{\rm sg}(R)$ are induced by shift of complexes.
It follows from the definition of singularity categories that the object class of $\D_{\rm sg}(R)$ is the same as $\D^{\rm b}(R\mod)$, and the morphisms from $X_\bullet$ to $Y_\bullet$ are the equivalence classes $\left[X_\bullet \leftarrow Z_\bullet \rightarrow Y_\bullet \right]$ of fractions $X_\bullet \leftarrow Z_\bullet \rightarrow Y_\bullet $ of morphisms in $\D^{\rm b}(R\mod)$ such that the cone of $X_\bullet \leftarrow Z_\bullet$ is perfect.
Since $\perf(R)$ is  thick, it follows that $\left[X_\bullet \leftarrow Z_\bullet \rightarrow Y_\bullet \right]$ is an isomorphism in $\D_{\rm sg}(R)$ if and only if the cone of $Z_\bullet \rightarrow Y_\bullet$ is perfect (cf. \cite[Proposition 2.1.35]{Neeman_Book}).

Recall that the {\it stable category} $R\smod$ of $R\mod$ is the category whose objects are the same as $R\mod$ and morphisms are given by
$\underline{\Hom}_{R}(M, N) := \Hom_{R}(M, N)/\mathcal{P}(M, N),$
where $\mathcal{P}(M, N)$ is the abelian subgroup of morphisms from $M$ to $N$ factoring through a projective module.
There is the so-called {\it loop space functor}  $\Omega_{R}: R\smod \rightarrow R\smod$ (see \cite{Buch86,Heller_1960}), which takes a module $M$ in $R\mod$ to the kernel $\Omega_{R}(M) = \Ker(P\rightarrow M)$ of an epimorphism $P\rightarrow M$ with $P$ in $R\proj$. 
We inductively set $\Omega_{R}^{i+1}(M) := \Omega_{R}(\Omega_{R}^{i}(M))$ for each $i \geq 0$ with $\Omega_{R}^{0}(M) = M$.
Now, consider the composition, denoted by $G$, of the fully faithful functor $R\mod \rightarrow \D^{\rm b}(R\mbox{-}\mathrm{mod})$ and the canonical triangle functor $\D^{\rm b}(R\mbox{-}\mathrm{mod}) \rightarrow \D_{\rm sg}(R)$, where the first functor sends a finitely generated $R$-module $M$ to the stalk complex $M$ concentrated in degree $0$.
Since $G(R)=0$, 
there uniquely exists a canonical functor $F:R\smod$ $\rightarrow$ $\D_{\rm sg}(R)$ making the diagram 
\[\xymatrix@=20pt{
R\mbox{-}\mathrm{mod} \ar[r] \ar[d]_-{G}  & R\mbox{-}\underline{\mathrm{mod}}  \ar[ld]^-{F} \\
 \D_{\rm sg}(R) &
}\]
commute, where the horizontal functor $R\mod \rightarrow R\smod$ is the canonical functor.
For each $M \in R\mod$, we denote by $M$ the object $F(M)$  when no confusion occurs.
It was shown by Buchweitz \cite[Lemma 2.2.2]{Buch86} that $\Omega_{R}(M) \cong M[-1]$ in $\D_{\rm sg}(R)$.

\subsection{Tate cohomology rings}
This subsection is devoted to recalling some facts on Tate cohomology from \cite{Buch86,Wang_2021}.
Let us begin with the definition of Tate cohomology groups.

%
%
\begin{defi}[{\cite[Definition 6.1.1]{Buch86} and \cite[page 11]{Wang_2021}}] \label{def_1} {\rm 
Let $i$ be an integer.
\begin{enumerate}
\item  Let $R$ be a left Noetherian ring and $M$ and $N$ two finitely generated $R$-modules. 
We define the {\it $i$-th Tate cohomology group of  $M$ with coefficients in  $N$} by
$\hExt_{R}^{i}(M, N) := \Hom_{\D_{\rm sg}(R)}(M, N[i])$.

\item  Let $\L$ be a (two-sided) Noetherian algebra over a commutative ring such that the enveloping algebra $\Le$ is Noetherian.
The {\it $i$-th Tate-Hochschild cohomology group} of $\L$ is defined by $\hHH^{i}(\Lambda):=\hExt_{\Le}^{i}(\Lambda, \Lambda)$.

\end{enumerate} 
}\end{defi}

%
%
\begin{rem} \label{rem_3} 
{\rm  
Tate cohomology groups were originally introduced by Buchweitz \cite{Buch86} within a framework over Gorenstein rings, and the terminology \lq Tate cohomology\rq\  comes from his observation in \cite[Section 8.2]{Buch86} that $\hExt_{\mathbb{Z}G}^{*}(\mathbb{Z}, N) \cong \hHh^{*}(G, N)$, where $\hHh^{*}(G, N)$ denotes the original Tate cohomology group of a finite group $G$ with coefficients in a finitely generated  $\mathbb{Z}G$-module $N$. 
}\end{rem}

For each finitely generated module $M$ over a left Noetherian ring $R$, one defines a graded ring structure on the Tate cohomology $\hExt_{R}^{\bullet}(M, M):= \bigoplus_{i \in \mathbb{Z}} \hExt_{R}^{i}(M, M)$ by using the {\it Yoneda product}
\begin{align*}
     \hExt_{R}^{i}(M, M) \times \hExt_{R}^{j}(M, M) \rightarrow \hExt_{R}^{i+j}(M, M); \quad (\alpha,  \beta) \mapsto \alpha[j] \circ \beta.
\end{align*}
Such a graded ring $\hExt_{R}^{\bullet}(M, M)$ is called the {\it Tate cohomology ring} of $M$, and, for an algebra $\L$ as in Definition \ref{def_1}, we call the Tate cohomology ring $\hExt_{\Le}^{\bullet}(\Lambda, \Lambda)$ the {\it Tate-Hochschild cohomology ring} of $\L$ and denote it by $\hHH^{\bullet}(\Lambda)$.
It was proved by Wang \cite[Proposition 4.7]{Wang_2021} 
that $\hHH^{\bullet}(\Lambda)$ is graded commutative when the ground ring of $\L$ is a field.

Observe that a finitely generated $R$-module $M$ has finite projective dimension  if and only if $M \cong 0$ in $\D_{\rm sg}(R)$  if and only if  $\hExt_{R}^{i}(M, M) = 0$ for all $i \in \mathbb{Z}$ (cf. \cite[Section 1]{Buch86}). 
In this case, the Tate cohomology ring of $M$ is the zero ring.
For this reason, we  are mainly interested in the modules of infinite projective dimension.


\section{Main result} 
In this section, we will prove our main result.
Let us first recall some basic facts on syzygies. 
Let $R$ be a left artin ring and $M$ a finitely generated $R$-module.
Then there exists a projective cover $p_M:P_M \rightarrow M$ with $P_M$ in $R\proj$, and hence the kernel $\Ker p_M$, called the {\it syzygy} of $M$, is uniquely determined up to isomorphism. 
We put, by abuse of notation, $\Omega_{R}(M):=\Ker p_M$ and define inductively $\Omega_{R}^{i+1}(M)=\Omega_{R}(\Omega_{R}^{i}(M))$ for $i \geq 0$ with  $\Omega_{R}^{0}(M)=M$.
Taking syzygies defines a well-defined endfunctor on $R\smod$, which agrees with the loop space functor on $R\smod$.
Throughout this section, the symbol $\Omega_R$ means taking syzygies unless otherwise stated.

Let $\L$ be an artin algebra, that is, an algebra over a commutative artin ring $S$ such that $\L$ is finitely generated over $S$.
If $\L$ is an artin algebra, then so is the enveloping algebra $\Le$.

Following K{\"u}pper \cite[Definition 2.3]{kupper2010two}, we now define eventually periodic algebras by using a concept of eventual periodicity for modules.

%
%
\begin{defi} \label{def_4} {\rm 
\begin{enumerate}

\item 
Let $R$ be a left artin ring.
We call a finitely generated $R$-module $M$ {\it periodic} if $\Omega_{R}^{p}(M) \cong M$ as $R$-modules for some $p >0$. 
The least such $p$ is called the {\it period} of $M$.
We say that $M$ is {\it eventually periodic} if  $\Omega_{R}^{n}(M)$ is non-zero and periodic for some $n \geq 0$.

\item 
An artin algebra $\L$ is said to be {\it periodic} $($resp.\ {\it eventually periodic$)$} if the $\Le$-module $\L$ is periodic $($resp.\ eventually periodic$)$.

\end{enumerate} 
}\end{defi}

%
%
\begin{rem} \label{rem_1} 
{\rm  
Our definition of eventually periodic modules is slightly stronger than the ordinary one (see \cite[page 170]{Bergh_2006} for example).
More precisely, eventually periodic modules in our sense are those of infinite projective dimension in the sense ever before. 
}\end{rem}

Let $M$ be an eventually periodic module whose $n$-th syzygy is periodic.  
Then it is easy to see that the $(n+i)$-th syzygy of $M$ with $i \geq 1$ is also periodic and has the same period as the $n$-th syzygy of $M$.
As a result, all the periodic syzygies of $M$ have the same period.

Recall that a finite dimensional $k$-algebra $\L$ is called {\it self-injective} if $\L$ is injective as a one-sided module over itself (see \cite{SkowYama11}); 
and that the algebra $\L$ is called {\it Gorenstein} if it has finite injective dimension on each side (see \cite{Buch86}). 
Self-injective algebras and algebras of finite global dimension are examples of Gorenstein algebras.

For the case of finite dimensional $k$-algebras, many authors have studied periodic algebras (see \cite{ErdSko08,ErdSko19}). 
In particular, we know from \cite[Proposition IV.11.18]{SkowYama11} that finite dimensional periodic algebras are self-injective and hence Gorenstein. 
On the other hand, finite dimensional eventually periodic algebras can be found in several papers. 
For example, Dotsenko, Gélinas and Tamaroff showed in the proof of \cite[Corollary 6.4]{DGT19} that  monomial Gorenstein algebras $\L$ having infinite projective dimension over $\Le$ are eventually periodic; and the author \cite[Sections 3.1 and 4]{Usui21_No02} provided examples of finite dimensional eventually periodic algebras (that are not periodic), and he also confirmed that there is an eventually periodic algebra that is not Gorenstein.
As will be seen in Corollary \ref{claim_30}, finite dimensional connected Nakayama algebras $\L$ with $\projdim_{\Le}\L=\infty$ are eventually periodic (when $k$ is algebraically closed). 
Also, K{\"u}pper \cite[Lemma 2.7]{kupper2010two} has given a necessary and sufficient condition for a monomial algebra to be eventually periodic, using a minimal set of monomial relations.

Let us prepare for the following lemma, which is probably well known to some experts. 
We include a proof for the benefit of the reader.

%
%
\begin{lemma} \label{claim_12} 
Let $R$ be a left Noetherian ring, and let $\alpha: X_\bullet \rightarrow Y_\bullet$ be a morphism in $\K^{\rm -, b}(R\proj)$.
If $\Cone(\alpha)_\bullet$ is perfect, then there exists an integer $l$ such that $\Omega_{i}(X_\bullet) \cong \Omega_{i}(Y_\bullet)$ in $R\smod$ for all $i \geq l$.
\end{lemma}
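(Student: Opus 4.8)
The statement compares the truncated syzygies $\Omega_i(X_\bullet)$ and $\Omega_i(Y_\bullet)$ of two right-bounded-with-bounded-homology complexes of projectives connected by a map $\alpha$ whose cone $C_\bullet := \Cone(\alpha)_\bullet$ is perfect. My plan is to exploit the standard triangle $X_\bullet \xrightarrow{\alpha} Y_\bullet \to C_\bullet \to X_\bullet[1]$ together with the fact that, for a right bounded complex of projectives $P_\bullet$ that is exact in all sufficiently negative degrees (which is the case here, since the homology is bounded), the truncation $\sigma_{\geq i}P_\bullet$ is an honest projective resolution of $\Omega_i(P_\bullet)$ for $i$ far enough below the homological window; consequently, for such $i$, $\Omega_i(P_\bullet) \cong \Omega_R^{?}(\text{something})$ is well-defined in $R\smod$ and behaves functorially up to homotopy. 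Since $C_\bullet$ is perfect, it is isomorphic in $\K^{-,b}(R\proj)$ to a \emph{bounded} complex of projectives, hence contractible in all degrees outside a bounded range, so $\Omega_i(C_\bullet) = 0$ (in fact $C_i = 0$ up to homotopy, and more to the point $\Omega_i(C_\bullet)$ is projective, hence zero in $R\smod$) for all $i$ below some bound $l$.

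**Key steps, in order.** First I would fix the homological window: choose $N$ so large that $\Hh_i(X_\bullet) = \Hh_i(Y_\bullet) = \Hh_i(C_\bullet) = 0$ for all $i < N$, and so that (using perfectness of $C_\bullet$) $C_\bullet$ is homotopy equivalent to a bounded complex of projectives concentrated in degrees $\geq N$ — equivalently $\Omega_i(C_\bullet)$ is a projective module for $i < N$. Second, I would record the elementary fact that a chain map $\varphi: P_\bullet \to Q_\bullet$ between right bounded complexes of projectives induces, on passing to $\Omega_i$ for $i$ in the exact range, a morphism $\Omega_i(\varphi): \Omega_i(P_\bullet) \to \Omega_i(Q_\bullet)$ in $R\smod$ that depends only on the homotopy class of $\varphi$, and that a homotopy equivalence induces an isomorphism; this is just the comparison theorem for projective resolutions applied degreewise. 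Third, I would apply $\Omega_i$ for $i$ below the window to the defining triangle: from the short exact sequence of complexes $0 \to Y_\bullet \to C_\bullet \to X_\bullet[1] \to 0$ (these are split in each degree since everything is projective), taking cokernels $\Omega_i$ gives an exact sequence relating $\Omega_i(Y_\bullet)$, $\Omega_i(C_\bullet)$ and $\Omega_{i-1}(X_\bullet) = \Omega_i(X_\bullet[1])$; since $\Omega_i(C_\bullet)$ is projective it vanishes in $R\smod$, and a short diagram chase (or the long exact sequence of the syzygies, using that the relevant higher syzygy of the projective $\Omega_i(C_\bullet)$ also vanishes) yields $\Omega_i(X_\bullet) \cong \Omega_i(Y_\bullet)$ in $R\smod$. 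Setting $l := N$ (or $N$ shifted by one to be safe) finishes the argument.

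**Main obstacle.** The delicate point is not the triangle bookkeeping but making precise the claim that $\Omega_i$ of a perfect complex is projective for $i$ small enough, and that the induced maps on $\Omega_i$ glue into an exact sequence in $R\smod$ rather than merely in $R\mod$. Concretely, from $0 \to Y_\bullet \to C_\bullet \to X_\bullet[1] \to 0$ one gets $0 \to \Omega_i(Y_\bullet) \to \Omega_i(C_\bullet) \to \Omega_{i-1}(X_\bullet) \to 0$ in $R\mod$ only after checking that the connecting terms vanish — this uses exactness of the complexes below the window, i.e. that $\sigma_{\geq i}$ really is a resolution there. Once that short exact sequence with projective middle term is in hand, $\Omega_i(Y_\bullet) \cong \Omega_R(\Omega_{i-1}(X_\bullet))$ up to a projective summand, and since $\Omega_{i-1}(X_\bullet) = \Omega_R(\Omega_i(X_\bullet))$ as well (again for $i$ below the window, using that $d^X_i$ has image a syzygy), we can invoke Schanuel's lemma / the stable equivalence $\Omega_R\Omega_R \simeq \Omega_R\Omega_R$ to descend to $\Omega_i(X_\bullet) \cong \Omega_i(Y_\bullet)$ in $R\smod$. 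So the real work is the careful choice of the bound $l$ and verifying the resolution property of truncations; everything downstream is formal.
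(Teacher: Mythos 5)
Your argument is sound in substance but takes a genuinely different route from the paper's. The paper truncates $\Cone(\alpha)_\bullet$ at a degree $r$ beyond which the augmented complex $\Cone(\alpha)_{\geq r}\to\Omega_r(\Cone(\alpha)_\bullet)$ is contractible (with $\Omega_r(\Cone(\alpha)_\bullet)$ projective by perfectness), splices that projective module onto the truncation of $Y_\bullet$, and observes that the resulting chain map out of $X_{\geq r-1}$ has precisely that contractible complex as its cone, hence is a homotopy equivalence; the stable isomorphisms of syzygies are then read off from this homotopy equivalence. You instead use the degreewise split short exact sequence $0\to Y_\bullet\to\Cone(\alpha)_\bullet\to X_\bullet[1]\to 0$, apply $\Omega_i$, note via the snake lemma that the connecting map $\Ker d_i^{X}\to\Omega_i(Y_\bullet)$, $z\mapsto[\alpha_i(z)]$, vanishes once $\Hh_i(X_\bullet)=0$, and then compare $0\to\Omega_i(Y_\bullet)\to\Omega_i(\Cone(\alpha)_\bullet)\to\Omega_{i-1}(X_\bullet)\to 0$ with $0\to\Omega_i(X_\bullet)\to X_{i-1}\to\Omega_{i-1}(X_\bullet)\to 0$ via Schanuel's lemma, both middle terms being projective. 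This is a clean, purely module-theoretic finish that avoids constructing a homotopy inverse; the paper's version produces the homotopy equivalence itself, which is what gets reused in the proof of Theorem \ref{claim_13}. Both need the same standard input, namely that a perfect complex in $\K^{\rm -, b}(R\proj)$ has projective (indeed split-off) syzygies far enough out.

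One thing to fix before writing this up: with the paper's conventions, objects of $\K^{\rm -, b}(R\proj)$ vanish in degrees $i\ll 0$ and have bounded homology, so the exactness window where the argument runs is $i\gg 0$, not $i<N$. Your inequalities (``$\Hh_i=0$ for all $i<N$'', ``for $i$ below the window'', ``for all $i$ below some bound $l$'') are all reversed, and the identification needed in the final step is $\Omega_i(X_\bullet)\cong\Omega_R(\Omega_{i-1}(X_\bullet))$ --- so that $\Omega_i(X_\bullet)$ and $\Omega_i(Y_\bullet)$ are both first syzygies of $\Omega_{i-1}(X_\bullet)$ taken with respect to projective presentations --- rather than $\Omega_{i-1}(X_\bullet)=\Omega_R(\Omega_i(X_\bullet))$ as you wrote. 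These are consistent bookkeeping slips, not gaps in the idea, but as stated the bounds point the wrong way.
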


\begin{proof}
Assume that $\Cone(\alpha)_\bullet$ is perfect.
Since $\Cone(\alpha)_\bullet$ has bounded homology, there exists an integer $r$ such that the augmented complex 
\begin{align} \label{eq_3}
    \Cone(\alpha)_{\geq r} \xrightarrow{\pi_r} \Omega_{r}(\Cone(\alpha)_\bullet)
\end{align}
is contractible, where $\Cone(\alpha)_{\geq r}$ is the stupid truncation of $\Cone(\alpha)_\bullet$. 
Note that $\Omega_{r}(\Cone(\alpha)_\bullet)$ is in $R\proj$.
Let $\iota_{Y_r}$ and $\iota_{X_{r-1}}$ be the canonical inclusions $Y_r \hookrightarrow \Cone(\alpha)_{r}$ and $X_{r-1} \hookrightarrow \Cone(\alpha)_{r}$, respectively.
Then one gets the chain map $\alpha^{\prime}: X_{\geq r-1}\rightarrow Y^\prime_\bullet$ given as follows:
\[\xymatrix{
X_{\geq r-1} = \cdots \ar[r] \ar@<-3ex>[d]_-{\alpha^\prime} & 
X_{r+2} \ar[r]^-{d_{r+2}^{X}} \ar[d]^-{\alpha_{r+2}} & 
X_{r+1} \ar[r]^-{d_{r+1}^{X}} \ar[d]^-{\alpha_{r+1}} &
X_{r} \ar[r]^-{d_{r}^{X}} \ar[d]^-{\alpha_{r}} & X_{r-1} \ar[r] \ar[d]^-{\pi_{r}\circ\iota_{X_{r-1}}}&
0 \ar[r] \ar[d] &
\cdots\\
Y^\prime_\bullet  = \cdots \ar[r] &
Y_{r+2} \ar[r]_-{d_{r+2}^{Y}} &
Y_{r+1} \ar[r]_-{d_{r+1}^{Y}} &
Y_{r} \ar[r]_-{\pi_{r}\circ\iota_{Y_r}} &
\Omega_{r}(\Cone(\alpha)) \ar[r] &
0 \ar[r]  &
\cdots 
}\]  
Since  $\Cone(\alpha^{\prime})$ is nothing but the contractible complex (\ref{eq_3}), the chain map  $\alpha^{\prime}: X_{\geq r-1}\rightarrow Y^\prime_\bullet$ is a homotopy equivalence.
Hence there exists a chain map $\beta^\prime: Y^\prime \rightarrow X_{\geq r-1}$ such that   $\beta^\prime \alpha^\prime \sim \id_{X_{\geq r-1}}$ and $\alpha^\prime  \beta^\prime \sim \id_{Y^\prime}$.
Since $X_\bullet$ and $Y_\bullet$ have bounded homology, there exists an integer $l > r$ such that $\Omega_{i}(X_\bullet)=\Omega_{i}(X_{\geq r-1}) \cong \Omega_{i}(Y^\prime_\bullet)=\Omega_{i}(Y_\bullet)$ in $R\smod$ for each $i \geq l$, where the isomorphism is induced by the homotopy equivalences $\alpha^\prime$ and $\beta^\prime$.
\end{proof}

We now characterize the existence of invertible homogeneous elements in Tate cohomology rings in terms of projective resolutions.
Notice that $\Omega_R$ denotes the loop space functor in the following theorem.

%
%
\begin{theo} \label{claim_13}  
Let $R$ be a left Noetherian ring, and let $p > 0$ be an integer. 
Then the following conditions are equivalent for a finitely generated $R$-module $M$.
\begin{enumerate}
    \item There exists an integer $n \geq 0$ such that $\Omega_R^{n+p}(M) \cong \Omega_R^{n}(M)$ in $R\smod$.
    \item The Tate cohomology ring  $\hExt_{R}^{\bullet}(M, M)$  has an invertible homogeneous element of degree $p$.  
\end{enumerate}
\end{theo}

\begin{proof}
We first prove that $(1)$ implies $(2)$.  
Let $f \in \underline{\Hom}_{R}(\Omega_{R}^{n+p}(M), \Omega_{R}^{n}(M))$ be an isomorphism, and recall that  the canonical functor $R\smod \rightarrow \D_{\rm sg}(R)$ is denoted by $F$. 
Consider the following homogeneous element
\begin{align*}
    x := F(f) \in \Hom_{\D_{\rm sg}(R)}(\Omega_{R}^{n+p}(M), \Omega_{R}^{n}(M)) &\cong \Hom_{\D_{\rm sg}(R)}(M[-p-n], M[-n])\\ &\cong \Hom_{\D_{\rm sg}(R)}(M, M[p]) \\ &= \hExt_{R}^{p}(M, M).
\end{align*}
Then $y:= F(f^{-1}) \in \Hom_{\D_{\rm sg}(R)}(\Omega_{R}^{n}(M), \Omega_{R}^{n+p}(M)) \cong \hExt_{R}^{-p}(M, M)$ is  the inverse of $x$ in the graded ring $\hExt_{R}^{\bullet}(M, M)$.

Conversely, suppose that there exists an isomorphism
\[
x \in \Hom_{\D_{\rm sg}(R)}(\Omega_{R}^{n+p}(M), \Omega_{R}^{n}(M)) \cong  \hExt_{R}^{p}(M, M)
\]
for some  $n \geq 0$.
Then it is represented by some fraction $\Omega_{R}^{n+p}(M) \xleftarrow{t} W_{\bullet} \xrightarrow{\alpha} \Omega_{R}^{n}(M)$ of morphisms in $\D^{\rm b}(R\mod)$ such that both $\Cone(t)_\bullet$ and $\Cone(\alpha)_\bullet$ are perfect.
Let $P_{\bullet}\rightarrow M$ be a minimal projective resolution of the $R$-module $M$. 
Then the augmented complex $P_{\geq n+p}[-n-p]\rightarrow \Omega_{R}^{n+p}(M)$ is a minimal projective resolution of the $R$-module $\Omega_{R}^{n+p}(M)$.
Let $P_\bullet^W \rightarrow W_\bullet$ be a quasi-isomorphism with $P_\bullet^W$ in $\K^{\rm -, b}(R\proj)$.
Then one has isomorphisms
\begin{align*}
    \Hom_{\D^{\rm b}(R\mod)}(W_\bullet, \Omega_{R}^{n+p}(M)) &\cong \Hom_{\D^{\rm b}(R\mod)}(W_\bullet, P_{\geq n+p}[-n-p])\\ &\cong \Hom_{\K^{\rm -, b}(R\proj)}(P_\bullet^W, P_{\geq n+p}[-n-p]).
\end{align*}
Similarly, one gets \[\Hom_{\D^{\rm b}(R\mod)}(W_\bullet, \Omega_{R}^{n}(M)) \cong \Hom_{\K^{\rm -, b}(R\proj)}(P_\bullet^W, P_{\geq n}[-n]).\]
Using the two morphisms in $\K^{\rm -, b}(R\proj)$ corresponding to $t$ and $\alpha$ via the above isomorphisms, we see from Lemma \ref{claim_12} that there exists a sufficiently large integer $l$ such that 
\[\Omega_{R}^{n+l}(M) = \Omega_{l}(P_{\geq n}[-n]) \cong \Omega_{l}(P_\bullet^W) \cong \Omega_{l}(P_{\geq n+p}[-n-p])= \Omega_{R}^{n+p+l}(M) \]
in $R\smod$. 
This completes the proof.
\end{proof}

\begin{rem}{\rm 
For a left Noetherian ring $R$, it follows from \cite[Corollary 3.9]{Beligiannis_2000} applied to $\mathcal{C}=R\mod$ and $\mathcal{P} = R\proj$ that $\D_{\rm sg}(R)$ is triangle equivalent to the stabilization $\mathcal{S}(R\mod)$ of $R\smod$ (cf.\ \cite[Section 6.5]{Buch86}).
Thus if $M$ is a finitely generated $R$-module and $p$ is an integer, then we have
\begin{align*}
    \hExt_{R}^{p}(M, M) \cong \Hom_{\mathcal{S}(R\mod)}((M,0), (M, p)) = \lim_{\substack{ k \rightarrow \infty}}\, \underline{\Hom}_{R}(\Omega_R^{k}(M), \Omega_R^{k-p}(M)).
\end{align*}
Note that the directed colimit is called the {\it $p$-th Tate-Vogel cohomology group} of $M$. 
The above isomorphism enables us to show the implication from (2) to (1) in the proof of Theorem \ref{claim_13} without Lemma \ref{claim_12}  (use \cite[Corollary 3.3 (3)]{Beligiannis_2000} for example).
}\end{rem}

The main result of this paper is obtained from the above theorem and gives a necessary and sufficient condition for a  module to be eventually periodic.

\begin{cor} \label{claim_13_1}  
Let $R$ be a left artin ring. 
Then the following conditions are equivalent for a finitely generated $R$-module $M$.
\begin{enumerate}
    \item $M$ is eventually periodic.
    \item The projective dimension of $M$ is infinite, and the Tate cohomology ring of $M$  has an invertible homogeneous element of positive degree.  
\end{enumerate}
In this case, there exists an invertible homogeneous element in the Tate cohomology ring of $M$ whose degree equals the period of some periodic syzygy $\Omega_R^{n}(M)$ with $n \geq 0$.
\end{cor}

\begin{proof}
To prove that (2) implies (1), we assume that there exists a (non-zero) invertible homogeneous element of degree $p > 0$ in $\hExt_{R}^{\bullet}(M, M)$.
Then it follows from Theorem \ref{claim_13} that $\Omega_{R}^{n+p}(M) \cong \Omega_{R}^{n}(M)$ in $R\smod$ for some $n \geq 0$, so that there exist two finitely generated projective $R$-modules $P$ and $Q$ such that  $\Omega_{R}^{n+p}(M) \oplus P \cong \Omega_{R}^{n}(M) \oplus Q$ in   $R\mod$.  
Taking their syzygies, we have an isomorphism $\Omega_{R}^{n+p+1}(M)\cong \Omega_{R}^{n+1}(M)$ of $R$-modules.

Now, suppose that $M$ is an eventually periodic $R$-module whose $n$-th syzygy is periodic of period $p$.  
Then we have $\projdim_R M = \infty$ and, by Theorem \ref{claim_13}, the graded ring $\hExt_{R}^{\bullet}(M, M)$ has an invertible homogeneous element of degree $p$. 
This finishes the proof.
\end{proof}

\begin{rem} {\rm 
For a commutative Noetherian local ring $R$, one can define eventually periodic $R$-modules by using minimal free resolutions as in Definition \ref{def_4} (see \cite[Section 8]{Avramov-Martsinkovsky_2002} for minimality of free resolutions).
Then Theorem \ref{claim_13} can be used to characterize eventually periodic $R$-modules as in Corollary \ref{claim_13_1}.
We leave to the reader to state and show the analogous result.
}\end{rem}

We end this section by giving a result on Tate-Hochschild cohomology rings.
Let $\L$ be an artin algebra.
Then the above corollary applied to ${}_{R}M = {}_{\Le}\L$ yields the following result, which generalizes \cite[Corollary 6.4]{DGT19}, \cite[Corollary 3.4]{Usui21} and \cite[Theorem 3.5]{Usui21_No02} (in a proper sense).

%
%
\begin{cor} \label{claim_14} 
We have the following statements.
\begin{enumerate}
\item The following conditions are equivalent for an artin algebra $\L$.
\begin{enumerate}
    \item $\L$ is eventually periodic.
    \item The projective dimension of the $\Le$-module $\L$ is infinite, and the Tate-Hochschild cohomology ring of $\L$ has an invertible homogeneous element of positive degree.  
\end{enumerate}   
\item Let $\L$ be a finite dimensional eventually periodic $k$-algebra. Then there exists an isomorphism of graded $k$-algebras \[\hHH^{\bullet}(\L) \cong \hHH^{\geq 0}(\L)[\chi^{-1}],\] where we set  $\hHH^{\geq 0}(\Lambda) :=\bigoplus_{i\geq 0}\hHH^{i}(\Lambda)$, and the degree of the invertible homogeneous element $\chi$ is equal to  the period of some periodic syzygy $\Omega_{\Le}^{n}(\L)$ with $n \geq 0$.
\end{enumerate}  
\end{cor}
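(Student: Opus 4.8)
The plan is to deduce Corollary~\ref{claim_14} from Theorem~\ref{claim_13} applied to the ring $R = \Le$ and the module $M = \L$. For part~(1), we note that $\L$ is an artin algebra, hence $\Le$ is an artin algebra and in particular a left artin ring, so Theorem~\ref{claim_13} applies verbatim: the $\Le$-module $\L$ is eventually periodic if and only if $\projdim_{\Le}\L = \infty$ and the Tate cohomology ring $\hExt_{\Le}^{\bullet}(\L,\L) = \hHH^{\bullet}(\L)$ has an invertible homogeneous element of positive degree. This is just unwinding Definition~\ref{def_4}(2) and the definition of $\hHH^{\bullet}$, so part~(1) is essentially immediate.

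For part~(2), assume $\L$ is a finite dimensional eventually periodic $k$-algebra. By part~(1) (or directly by the last sentence of Theorem~\ref{claim_13} applied to $M=\L$), there is an invertible homogeneous element $\chi \in \hHH^{p}(\L)$, where $p$ is the period of some periodic syzygy $\Omega_{\Le}^{n}(\L)$; since all periodic syzygies share the same period, $p$ is well defined. I would then set $\chi^{-1} \in \hHH^{-p}(\L)$ to be its inverse in the graded ring. The remaining task is to identify the graded $k$-algebra $\hHH^{\bullet}(\L)$ with the localization $\hHH^{\geq 0}(\L)[\chi^{-1}]$. One inclusion is clear: the subalgebra generated by $\hHH^{\geq 0}(\L)$ together with $\chi^{-1}$ sits inside $\hHH^{\bullet}(\L)$. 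For the reverse, I would argue that multiplication by $\chi$ gives an isomorphism $\hHH^{i}(\L) \to \hHH^{i+p}(\L)$ for every $i \in \Z$ — this holds because $\chi$ is a unit, so the map has inverse given by multiplication by $\chi^{-1}$. Consequently every homogeneous element of negative degree, say in $\hHH^{-j}(\L)$ with $j>0$, can be written as $\chi^{-m}$ times an element of $\hHH^{mp-j}(\L) \subseteq \hHH^{\geq 0}(\L)$ for $m$ large enough that $mp \geq j$; and in fact every element of $\hHH^{\bullet}(\L)$ lies in $\hHH^{\geq 0}(\L)[\chi^{-1}]$. The graded commutativity of $\hHH^{\bullet}(\L)$ (Wang, \cite[Proposition 4.7]{Wang_2021}, valid since the ground ring is a field) ensures the localization at the Ore set $\{1,\chi,\chi^2,\dots\}$ is the ordinary (commutative) localization, so there is no subtlety about left versus right fractions.

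The one point requiring a little care is that the presentation $\hHH^{\bullet}(\L) \cong \hHH^{\geq 0}(\L)[\chi^{-1}]$ should be read as: the natural graded $k$-algebra map $\hHH^{\geq 0}(\L)[\chi^{-1}] \to \hHH^{\bullet}(\L)$, sending the formal inverse of $\chi$ to $\chi^{-1}$, is an isomorphism. Surjectivity follows from the degree-shift isomorphism argument above; injectivity follows because if a fraction $\xi/\chi^{m}$ maps to $0$, then $\xi = 0$ in $\hHH^{mp}$-degree already, since multiplication by $\chi^{m}$ is injective. I expect the main (and really the only) obstacle to be purely bookkeeping: carefully tracking the grading conventions so that $\chi \in \hHH^{p}$ with $p>0$ and its inverse in $\hHH^{-p}$ interact correctly with the Yoneda product as defined in the excerpt, and confirming that the ``degree of $\chi$'' can be taken to be exactly the period $p$ rather than merely some positive multiple of it — but this last fact is exactly the content of the final sentence of Theorem~\ref{claim_13}, so it is already available. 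No genuinely new ideas beyond Theorem~\ref{claim_13} and elementary graded localization are needed.
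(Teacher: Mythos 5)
Your proposal is correct and follows the same route as the paper: part (1) is Theorem \ref{claim_13} applied to $R=\Le$ and $M=\L$, and part (2) combines the invertible element of degree equal to the period from Theorem \ref{claim_13} with Wang's graded commutativity of $\hHH^{\bullet}(\L)$ to obtain the localization isomorphism. Your write-up merely spells out the graded-localization step that the paper leaves implicit, so there is nothing to add.
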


\begin{proof}
This follows from Corollary \ref{claim_13_1} and the fact that $\hHH^{\bullet}(\L)$ is a graded commutative ring for a finite dimensional $k$-algebra $\L$.
\end{proof}


\section{Applications} 

There are two aims of this section. 
The first is to prove that, for the class of finite dimensional $k$-algebras, the property of being eventually periodic is preserved under singular equivalence of Morita type with level. 
The second is to provide  a necessary and sufficient condition for a finite dimensional connected Nakayama $k$-algebra to be eventually periodic when the ground field $k$ is algebraically closed.

\subsection{Eventually periodic algebras and singular equivalences of Morita type with level} 

Throughout this subsection, unless otherwise stated, $\Omega_R$ denotes the loop space functor on $R\smod$.

This subsection is devoted to showing that singular equivalences of Morita type with level preserve the eventual periodicity of finite dimensional $k$-algebras.  
Let us start with the notion of singular equivalences, which was introduced by Chen \cite{X-WChen_2014}. 
We say that  two Noetherian $k$-algebras $\L$ and $\G$ are {\it singularly equivalent} if there exists a triangle equivalence between $\D_{\rm sg}(\L)$ and $ \D_{\rm sg}(\G)$.
Such a triangle equivalence is called a {\it singular equivalence}. 
On the other hand, Wang \cite{Wang_2015} introduced the notion of {\it singular equivalences of Morita type with level} and observed that such equivalences yield singular equivalences between two algebras and between their enveloping algebras.

%
%
\begin{defi}[{\cite[Definition 2.1]{Wang_2015}}]  \label{def_3} 
{\rm   
Let $\L$ and $\G$ be two Noetherian $k$-algebras such that the enveloping algebras $\Le$ and $\Ge$ are Noetherian.
Let $l$ be a non-negative integer and ${}_{\L}M_{\G}$ and ${}_{\G}N_{\L}$ two bimodules.
We say that $({}_{\L}M_{\G},\, {}_{\G}N_{\L})$ {\it defines a singular equivalence of Morita type with level $l$} (and that $\L$ and $\G$ are {\it singularly equivalent of Morita type with level} $l$) if the following conditions are satisfied.
\begin{enumerate}
    \item The one-sided modules ${}_{\L}M, M_{\G}, {}_{\G}N$ and $N_{\L}$ are finitely generated and projective.
    \item  There exist isomorphisms $M \otimes_{\G} N \cong \Omega_{\Le}^{l}(\L)$ and $N \otimes_{\L} M \cong \Omega_{\Ge}^{l}(\G)$ in $\Le\smod$ and $\Ge\smod$, respectively.
\end{enumerate}
}\end{defi}

%
%
\begin{rem} \label{rem_2}
{\rm
As mentioned in \cite[Remark 2.1]{Wang_2015} and \cite[Remark 6.1]{Wang_2019}, one easily sees that any pair $({}_{\L}M_{\G},\, {}_{\G}N_{\L})$ defining a singular equivalence of Morita type with level $l$ induces two well-defined singular equivalences
\[N \otimes_{\L}-: \D_{\rm sg}(\L) \rightarrow \D_{\rm sg}(\G)\]
and 
\[\Phi:=(N \otimes_{\L}-\otimes_{\L} M) [l] : \D_{\rm sg}(\Le) \rightarrow \D_{\rm sg}(\Ge)\]
with quasi-inverses
\[(M \otimes_{\G}-)[l]: \D_{\rm sg}(\G) \rightarrow \D_{\rm sg}(\L)\]
and 
\[ (M \otimes_{\G}-\otimes_{\G} N) [l] : \D_{\rm sg}(\Ge) \rightarrow \D_{\rm sg}(\Le),\]
respectively.
Note that $\Phi(\L) \cong \G$ in $\D_{\rm sg}(\Ge)$.
It was proved by Wang \cite[Theorem 1.1]{Wang_2019} that the singular equivalence $\Phi: \D_{\rm sg}(\Le) \rightarrow \D_{\rm sg}(\Ge)$ induces an isomorphism $\hHH^{\bullet}(\L) \cong \hHH^{\bullet}(\G)$ of graded algebras.
}\end{rem}

%
%
\begin{rem}
{\rm
Let $\L, \G$ and $\Sigma$ be finite dimensional $k$-algebras, and let   $\Omega_{\L\mbox{-}\G}$ denote $\Omega_{\L \otimes_k \G^{\rm op}}$.
Write $\L \sim \G$ if $\L$ and $\G$ are singularly equivalent of Morita type with level.
Then if $\L \sim \G$ and $\G \sim \Sigma$, then $\L \sim \Sigma$.
This follows from the observation that if $\left({}_{\L}M_{\G},\, {}_{\G}N_{\L}\right)$ and  $\left({}_{\G}X_{\Sigma},\, {}_{\Sigma}Y_{\G}\right)$ define singular equivalences of Morita type with level $l$ and $w$, respectively, then  $\left({}_{\L}M \otimes_{\G} X_{\Sigma},\, {}_{\Sigma}Y \otimes_{\G} N_{\L}\right)$ defines a singular equivalence of Morita type with level $l + w$.
Here, we use isomorphisms $\Omega_{\L\mbox{-}\G}(M) \cong M \otimes_\G \Omega_{\Le}(\L)$ in $(\L \otimes_k \G^{\rm op})\smod$ and $\Omega_{\Le}(M\otimes_\G N) \cong   \Omega_{\L\mbox{-}\G}(M) \otimes_{\G} N \cong  M \otimes_\G  \Omega_{\G\mbox{-}\L}(N)$ in $\Le\smod$. 
Thus one concludes that $ \sim $ becomes an equivalence relation on the class of finite dimensional $k$-algebras.
}\end{rem}

Before proving the main result of this subsection, we prepare for two statements, which will be used in the next subsection.
The first shows that any two algebras are singularly equivalent of Morita type with level whenever they have finite projective dimension as bimodules.

%
%
\begin{lemma} \label{claim_31} 
Let $\L$ and $\G$ be as in Definition $\ref{def_3}$.
If $\projdim_{\Le}\L$ and  $\projdim_{\Ge}\G$ are both finite, then $\L$ and $\G$ are singularly equivalent of Morita type with level.
\end{lemma}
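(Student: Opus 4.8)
The plan is to verify Definition \ref{def_3} directly by writing down an explicit pair of bimodules. Put $l := \max\{\projdim_{\Le}\L,\, \projdim_{\Ge}\G\}$, which is a non-negative integer by hypothesis, and take the pair of zero bimodules $({}_{\L}0_{\G},\, {}_{\G}0_{\L})$. Condition $(1)$ of Definition \ref{def_3} is immediate, since the zero module is finitely generated and projective on either side. For condition $(2)$, one has $0 \otimes_{\G} 0 = 0$ and $0 \otimes_{\L} 0 = 0$, so it remains to check that $\Omega_{\Le}^{l}(\L) \cong 0$ in $\Le\smod$ and $\Omega_{\Ge}^{l}(\G) \cong 0$ in $\Ge\smod$. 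This is the only non-formal input: since $\projdim_{\Le}\L \leq l$, an $l$-th syzygy of the $\Le$-module $\L$ is projective (by dimension shifting, or Schanuel's lemma), and a projective module represents the zero object of $\Le\smod$; the same applies to $\G$ over $\Ge$. This yields the two required isomorphisms, so $({}_{\L}0_{\G},\, {}_{\G}0_{\L})$ defines a singular equivalence of Morita type with level $l$, and hence $\L$ and $\G$ are singularly equivalent of Morita type with level.

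There is no serious obstacle here; the argument is essentially a bookkeeping check against the definition, and the single step deserving a word of justification is the claim that a syzygy of order at least the projective dimension is projective.

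I would also add a line of context. Because $\L$ is free over the field $k$, the finiteness of $\projdim_{\Le}\L$ forces $\gldim\L < \infty$, and likewise for $\G$, so that $\D_{\rm sg}(\L) = 0 = \D_{\rm sg}(\G)$; thus the singular equivalence induced (as in Remark \ref{rem_2}) by the zero pair is just the unique equivalence between zero categories, which makes the statement unsurprising. If a non-zero witness is wanted in the finite-dimensional case, one may instead take $M := {}_{\L}\L_{k}$ and $N := {}_{k}\L_{\L}$: here $M \otimes_{k} N \cong \Le$ is $\Le$-projective and $N \otimes_{\L} M \cong \L$ is $k$-free, so $({}_{\L}\L_{k},\, {}_{k}\L_{\L})$ defines a singular equivalence of Morita type with level $l$ between $\L$ and $k$, and combining this with the analogous statement for $\G$ and transitivity of the relation reproves the lemma. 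For general Noetherian algebras, however, $\L$ need not be finitely generated over $k$, so the zero-bimodule argument is the clean one.
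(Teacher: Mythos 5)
Your proof is correct and is essentially the paper's own argument: the paper likewise sets $l=\max\{\projdim_{\Le}\L,\ \projdim_{\Ge}\G\}$ and checks that the pair of zero bimodules $({}_{\L}0_{\G},\,{}_{\G}0_{\L})$ satisfies Definition \ref{def_3}, using that the $l$-th syzygies are projective and hence zero in the stable categories. The additional remarks about $\D_{\rm sg}(\L)=0$ and the alternative witness are fine but not needed.
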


\begin{proof}
Assume that $\projdim_{\Le}\L < \infty$ and  $\projdim_{\Ge}\G < \infty$, and let \[l :=  \mathrm{max}\left\{\,\projdim_{\Le}\L,\, \projdim_{\Ge}\G\, \right\}.\] 
Then 
$0 \otimes_\L 0 \cong 0 \cong \Omega_{\Le}^{l}(\L)$ in $\Le\smod$, and similarly $0 \otimes_\G 0  \cong \Omega_{\Ge}^{l}(\G)$
in $\Ge\smod$. 
This implies that $({}_{\L}0_{\G},\, {}_{\G}0_{\L})$ defines a singular equivalence of Morita type with level $l$.
\end{proof}

It follows from Definition \ref{def_3} that singular equivalences of Morita type with level $0$ are nothing but {\it stable equivalences of Morita type} in the sense of Brou\'{e} \cite{Broue_1994} (cf. \cite[Remark 4.3]{Dalezios_2021}).
The next observation tells us that any two self-injective algebras are stably equivalent of Morita type whenever they are singularly equivalent of Morita type with level (compare \cite[Proposition 3.7]{Skart_2016}).

%
%
\begin{lemma} \label{claim_32} 
For two finite dimensional self-injective $k$-algebras $\L$ and $\G$, the following conditions are equivalent.
\begin{enumerate}
    \item $\L$ and $\G$ are singularly equivalent of Morita type with level.
    \item $\L$ and $\G$ are stably equivalent of Morita type.
\end{enumerate}
\end{lemma}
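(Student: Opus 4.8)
The plan is to note that $(2)\Rightarrow(1)$ is immediate and to reduce $(1)\Rightarrow(2)$ to ``lowering the level to $0$'' by repeatedly applying cosyzygy functors. For $(2)\Rightarrow(1)$: as recalled just before the statement, a stable equivalence of Morita type is precisely a singular equivalence of Morita type with level $0$, so there is nothing to prove. The work is in $(1)\Rightarrow(2)$, so assume $({}_{\L}M_{\G},{}_{\G}N_{\L})$ defines a singular equivalence of Morita type with level $l$, i.e.\ $M\otimes_{\G}N\cong\Omega_{\Le}^{l}(\L)$ in $\Le\smod$ and $N\otimes_{\L}M\cong\Omega_{\Ge}^{l}(\G)$ in $\Ge\smod$, with ${}_{\L}M,M_{\G},{}_{\G}N,N_{\L}$ finitely generated projective.

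First I would record the facts about self-injectivity to be used. Since $\L$ and $\G$ are finite dimensional self-injective $k$-algebras, so are $\L^{\rm op}$, $\G^{\rm op}$ and hence also $\Le$, $\Ge$ and $\L\otimes_k\G^{\rm op}$: indeed, over a field one has $D(A\otimes_k B)\cong D(A)\otimes_k D(B)$, and, $A$ being self-injective, ${}_{A}A$ is a split summand of a power of the injective cogenerator ${}_{A}D(A)$, so ${}_{A\otimes_k B}(A\otimes_k B)$ is a split summand of an injective module, whence $A\otimes_k B$ is self-injective. Consequently, over each of $\L$, $\G$, $\Le$, $\Ge$, $\L\otimes_k\G^{\rm op}$, the loop-space functor $\Omega$ is an auto-equivalence of the stable module category with quasi-inverse the cosyzygy functor $\Omega^{-1}$, and for a finitely generated module $X$ the module $\Omega^{-1}(X)$ may be computed as $\Coker(X\hookrightarrow P)$ for any embedding of $X$ into a finitely generated projective ($=$ injective) module $P$.

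Now I would set $M':=\Omega_{\L\mbox{-}\G}^{-l}(M)$, where $\Omega_{\L\mbox{-}\G}$ is the loop-space functor on $(\L\otimes_k\G^{\rm op})\smod$ and the cosyzygy is computed via iterated embeddings of finitely generated bimodules into finitely generated projective ($=$ injective) $(\L\otimes_k\G^{\rm op})$-modules, and $N':=N$, and check that $({}_{\L}M'_{\G},{}_{\G}N'_{\L})$ defines a singular equivalence of Morita type with level $0$. There are two points. First, $M'$ is finitely generated projective on each side: if $M\hookrightarrow I$ with $I$ a finitely generated projective ($=$ injective) $\L$-$\G$-bimodule, then $I$ is finitely generated projective on each side, and since $\G$ is self-injective $M_{\G}$ is injective, so $M_{\G}\hookrightarrow I_{\G}$ splits and $(\Omega_{\L\mbox{-}\G}^{-1}M)_{\G}=(I/M)_{\G}$ is a summand of the projective module $I_{\G}$; the same argument applies on the left, and iterating $l$ times gives the claim, so $({}_{\L}M'_{\G},{}_{\G}N'_{\L})$ satisfies condition $(1)$ of Definition \ref{def_3}. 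Second, applying $-\otimes_{\G}N$ to $0\to M\to I\to\Omega_{\L\mbox{-}\G}^{-1}(M)\to 0$ keeps it exact because ${}_{\G}N$ is projective, and $I\otimes_{\G}N$ is a projective ($=$ injective) $\Le$-module since $I$ is a summand of copies of $\L\otimes_k\G$ and $(\L\otimes_k\G)\otimes_{\G}N\cong\L\otimes_k N$ is $\Le$-projective as $N_{\L}$ is projective; hence $\Omega_{\L\mbox{-}\G}^{-1}(M)\otimes_{\G}N\cong\Omega_{\Le}^{-1}(M\otimes_{\G}N)$ in $\Le\smod$, and iterating $l$ times yields $M'\otimes_{\G}N\cong\Omega_{\Le}^{-l}\Omega_{\Le}^{l}(\L)\cong\L$ in $\Le\smod$. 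The symmetric computation, using $N\otimes_{\L}-$ in place of $-\otimes_{\G}N$, gives $N\otimes_{\L}M'\cong\Omega_{\Ge}^{-l}\Omega_{\Ge}^{l}(\G)\cong\G$ in $\Ge\smod$. Thus $({}_{\L}M'_{\G},{}_{\G}N'_{\L})$ witnesses that $\L$ and $\G$ are stably equivalent of Morita type.

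The step I expect to be the main obstacle is this last one: one must check that the cosyzygy of a bimodule, a priori defined only up to stable isomorphism, can be realised by an honest bimodule that is finitely generated projective on each side, and that tensoring with $N$ on the appropriate side commutes, up to stable isomorphism, with taking cosyzygies. Both reduce to flatness of the one-sided modules ${}_{\L}M$, $M_{\G}$, ${}_{\G}N$, $N_{\L}$ together with the identification of projectives and injectives over the self-injective algebras $\Le$, $\Ge$, $\L\otimes_k\G^{\rm op}$, $\L$ and $\G$; once these are in place the verification is routine and, in particular, independent of the choices of injective hulls made along the way.
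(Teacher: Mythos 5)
Your proposal is correct and follows essentially the same route as the paper: both reduce to showing $(1)\Rightarrow(2)$ by replacing $M$ with the cosyzygy bimodule $\Omega_{\L\mbox{-}\G}^{-l}(M)$ and verifying, via the identification of projectives and injectives over the self-injective algebras $\Le$, $\Ge$ and $\L\otimes_k\G^{\rm op}$, that cosyzygies commute with $-\otimes_{\G}N$ and $N\otimes_{\L}-$ up to stable isomorphism. You merely spell out in more detail the verifications (one-sided projectivity of the cosyzygy and the commutation isomorphisms) that the paper dismisses as ``easy to see.''
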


\begin{proof}
It suffices to prove that (1) implies (2).
Suppose that $({}_{\L}M_{\G},\, {}_{\G}N_{\L})$ defines a singular equivalence of Morita type with level $l$.
Recall that if $\L$ and $\G$ are finite dimensional self-injective $k$-algebras, then so are $\G^{\rm op}$ and $\L \otimes \G^{\rm op}$.
Then it is easy to see that $\Omega_{\Le}^{-1}(M \otimes_{\G} N) \cong \Omega_{\L\mbox{-}\G}^{-1}(M) \otimes_{\G} N \cong M \otimes_{\G}  \Omega_{\G\mbox{-}\L}^{-1}(N)$ 
in $\Le\smod$, where $\Omega_{\Le}^{-1}(M \otimes_{\G} N)$ denotes the cokernel of an injective envelope of the $\Le$-module $M \otimes_{\G} N$.
Hence one obtains $\Omega_{\L\mbox{-}\G}^{-l}(M) \otimes_{\G} N \cong \Omega_{\Le}^{-l}(\Omega_{\Le}^{l}(\L)) \cong \L$
in $\Le\smod$ and similarly $N \otimes_{\L} \Omega_{\L\mbox{-}\G}^{-l}(M) \cong \G$ in $\Ge\smod$.
Thus we conclude that $( \Omega_{\L\mbox{-}\G}^{-l}(M),\, N)$ defines a singular equivalence of Morita type with level $0$ between $\L$ and $\G$.
\end{proof}

Now, we prove the main result of this subsection, which is an easy consequence of Corollary \ref{claim_14} and Wang's result \cite{Wang_2019}.

%
%
\begin{theo} \label{claim_15}  
Assume that two finite dimensional $k$-algebras $\L$ and $\G$ are singularly equivalent of Morita type with level. 
If $\L$ is eventually periodic, then so is $\G$. 
In particular, the periods of their  periodic syzygies coincide.
\end{theo}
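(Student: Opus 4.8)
The plan is to leverage Corollary \ref{claim_14}(1), which characterizes eventual periodicity of an artin algebra purely in terms of its Tate-Hochschild cohomology ring, together with Wang's theorem (quoted in Remark \ref{rem_2}) that a singular equivalence of Morita type with level induces an isomorphism $\hHH^{\bullet}(\L) \cong \hHH^{\bullet}(\G)$ of \emph{graded} algebras. The first thing to check is the condition on projective dimension: if $\L$ is eventually periodic then $\projdim_{\Le}\L = \infty$, and we must deduce $\projdim_{\Ge}\G = \infty$. This is where we use the contrapositive of Lemma \ref{claim_31} indirectly — more precisely, if $\projdim_{\Ge}\G$ were finite, then $\D_{\rm sg}(\Ge)$-invariants would force $\hHH^{\bullet}(\G) = 0$; but the graded isomorphism $\hHH^{\bullet}(\L) \cong \hHH^{\bullet}(\G)$ would then give $\hHH^{\bullet}(\L) = 0$, contradicting that $\L$ is eventually periodic (its Tate-Hochschild cohomology ring contains an invertible element, so is nonzero). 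Hence $\projdim_{\Ge}\G = \infty$.

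\textbf{Main argument.} Since $\L$ is eventually periodic, Corollary \ref{claim_14}(1) tells us that $\hHH^{\bullet}(\L)$ contains an invertible homogeneous element $\chi$ of positive degree, and moreover (by the last clause of that corollary / Theorem \ref{claim_13}) we may take $\deg\chi = p$, where $p$ is the common period of the periodic syzygies of the $\Le$-module $\L$. Let $\theta : \hHH^{\bullet}(\L) \xrightarrow{\sim} \hHH^{\bullet}(\G)$ be the graded $k$-algebra isomorphism from Remark \ref{rem_2}. Then $\theta(\chi)$ is a homogeneous element of $\hHH^{\bullet}(\G)$ of degree $p > 0$, and it is invertible because $\theta$ is a ring isomorphism (with inverse $\theta(\chi^{-1}) = \theta(\chi)^{-1}$, noting $\chi^{-1}$ is homogeneous of degree $-p$). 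Applying Corollary \ref{claim_14}(1) in the reverse direction to $\G$ — whose bimodule projective dimension we have just shown is infinite — we conclude that $\G$ is eventually periodic.

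\textbf{The period statement.} It remains to see that the periods agree. By Theorem \ref{claim_13} applied to $\G$ (the ``in this case'' clause), there is an invertible homogeneous element of $\hHH^{\bullet}(\G)$ whose degree equals the period $q$ of some periodic syzygy of the $\Ge$-module $\G$; and since all periodic syzygies of an eventually periodic module share a common period, $q$ is \emph{the} period of $\G$. Now I claim $p = q$. On one hand, $\theta(\chi)$ is an invertible element of $\hHH^{\bullet}(\G)$ of degree $p$, so the positive integer $p$ lies in the set of degrees of invertible homogeneous elements; symmetrically, using $\theta^{-1}$, the positive integer $q$ lies in the set of degrees of invertible homogeneous elements of $\hHH^{\bullet}(\L)$. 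The crux is that for an eventually periodic artin algebra, the set of \emph{positive} degrees carrying an invertible homogeneous element is exactly the set of positive multiples of the period $p$ (the invertible elements of $\hHH^{\bullet}(\L)$ form a subgroup of the graded units, and by graded commutativity — Corollary \ref{claim_14}(2) — this subgroup is $\hHH^{\bullet}(\L) \cong \hHH^{\geq 0}(\L)[\chi^{-1}]$, graded by $p\Z$ on units). Hence $q$ is a positive multiple of $p$ and $p$ is a positive multiple of $q$, forcing $p = q$.

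\textbf{Expected main obstacle.} The substantive point, and the one I would write out carefully, is pinning down that the degrees of invertible homogeneous elements form exactly $p\,\mathbb{Z}$ — equivalently, that $\hHH^{\bullet}(\L)$ is ``$\Z$-graded with support in a single $p$-coset on units'', so that transporting an invertible element of degree $p$ across $\theta$ and pulling back an invertible element of degree $q$ cannot produce incompatible periods. One has to be a little careful because a priori $\hHH^{\bullet}(\L)$ could have invertible elements in several degrees; the resolution is that any invertible homogeneous element of minimal positive degree $d$ gives, via the construction in the proof of Theorem \ref{claim_13}, a periodic syzygy of period dividing $d$, while the period $p$ itself arises this way, so minimality gives $d = p$ and then all invertible-element degrees are multiples of $p$. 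Everything else — transporting invertibility along a graded ring isomorphism, and the infinite-projective-dimension bookkeeping — is routine.
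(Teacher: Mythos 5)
Your proposal is correct and follows essentially the same route as the paper: transport the invertible homogeneous element of degree $p$ along Wang's graded isomorphism $\hHH^{\bullet}(\L)\cong\hHH^{\bullet}(\G)$, apply Corollary \ref{claim_14} in reverse, and get $p=q$ by running the $(2)\Rightarrow(1)$ construction of Theorem \ref{claim_13} in both directions to obtain mutual divisibility. Your explicit verification that $\projdim_{\Ge}\G=\infty$ (via nonvanishing of the Tate--Hochschild cohomology) is a detail the paper leaves implicit, and your "set of degrees of units equals $p\mathbb{Z}_{>0}$" packaging is just a restatement of the same divisibility argument.
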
 

\begin{proof}  
Suppose that $\L$ is eventually periodic.
By \cite[Theorem 1.1]{Wang_2019} and Corollary \ref{claim_14}, there exist isomorphisms of graded $k$-algebras 
\begin{align} \label{eq_2}
    \hHH^{\bullet}(\G) \cong \hHH^{\bullet}(\L) \cong \hHH^{\geq 0}(\L)[\chi_{\L}^{-1}].
\end{align} 
Then it follows from Corollary \ref{claim_14} that $\G$ is also eventually periodic.

Now, let $p$ and $q$ be the periods of some periodic syzygies of the regular bimodules $\L$ and $\G$, respectively.
We claim that $p=q$.
It follows from the isomorphism (\ref{eq_2}) that $\hHH^{\bullet}(\G)$ has an invertible homogeneous element of degree $p$.
Then an argument as in the proof of the implication from (1) to (2) in Corollary \ref{claim_13_1} shows that there exists an isomorphism $\Omega_{\Ge}^{j+p}(\G) \cong \Omega_{\Ge}^{j}(\G)$ in $\Ge\mod$ for some $j \geq 0$, where $\Omega_{\Ge}$ denotes the syzygy over $\Ge$.
Since the periodic syzygy $\Omega_{\Ge}^{j}(\G)$ has period $q$, we obtain that $q$ divides $p$.
Similarly, the existence of an invertible homogeneous element of degree $q$ in $\hHH^{\bullet}(\L)$ implies that $p$ divides $q$.
Since $p$ and $q$ are positive, we conclude that $p=q$.
\end{proof}

Recall that two finite dimensional $k$-algebras $\L$ and $\G$ are  {\it derived equivalent} if there exists a triangle equivalence between $\D^{\rm b}(\L\mod)$ and $\D^{\rm b}(\G\mod)$ (see \cite{Rick_1989_JournalOfTheLondon}).
It was proved by Wang \cite[Theorem 2.3]{Wang_2015} that any two derived equivalent finite dimensional $k$-algebras are singularly equivalent of Morita type with level.
Thus we obtain the following result, which generalizes a result of Erdmann and Skowro\'{n}ski  \cite[Theorem 2.9]{ErdSko08}.

%
%
\begin{cor} \label{claim_17}  
Let $\L$ and $\G$ be two derived equivalent finite dimensional $k$-algebras. 
If $\L$ is eventually periodic, then so is $\G$. 
In particular, the periods of their  periodic syzygies coincide.
\end{cor}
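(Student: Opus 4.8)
The plan is to derive Corollary \ref{claim_17} as an immediate consequence of Theorem \ref{claim_15} together with Wang's theorem \cite[Theorem 2.3]{Wang_2015}. The chain of implications is short: derived equivalence of $\L$ and $\G$ gives, by Wang's result, that $\L$ and $\G$ are singularly equivalent of Morita type with level; then Theorem \ref{claim_15} transfers eventual periodicity from $\L$ to $\G$ and simultaneously forces the periods of the periodic syzygies over $\Le$ and $\Ge$ to agree.

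First I would invoke \cite[Theorem 2.3]{Wang_2015} to replace the hypothesis ``$\L$ and $\G$ are derived equivalent'' by ``$\L$ and $\G$ are singularly equivalent of Morita type with level $l$'' for some non-negative integer $l$. Since $\L$ and $\G$ are finite dimensional $k$-algebras, the hypotheses of Theorem \ref{claim_15} are met. Next I would apply Theorem \ref{claim_15} directly: because $\L$ is eventually periodic, so is $\G$, and the periods of their periodic syzygies (as $\Le$- and $\Ge$-modules respectively) coincide. This already gives both assertions of the corollary, so no further argument is needed.

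There is essentially no obstacle here — the corollary is a formal corollary in the strict sense, and the only thing to be careful about is citing the correct results and noting that derived equivalence of finite dimensional $k$-algebras is covered by Wang's theorem. I would write the proof as a two-line deduction.

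\begin{proof}
By \cite[Theorem 2.3]{Wang_2015}, any two derived equivalent finite dimensional $k$-algebras are singularly equivalent of Morita type with level.
Hence $\L$ and $\G$ are singularly equivalent of Morita type with level, and the assertion follows immediately from Theorem \ref{claim_15}.
\end{proof}
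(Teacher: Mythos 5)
Your proof is correct and follows exactly the paper's route: the paper also deduces the corollary by citing \cite[Theorem 2.3]{Wang_2015} to pass from derived equivalence to singular equivalence of Morita type with level, and then applies Theorem \ref{claim_15}. Nothing is missing.
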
 

We end this subsection with examples of eventually periodic algebras.
Note that one can find $\G$ and $\Sigma$ below in  \cite[Example 4.3 (2)]{X-WChen_2009} and \cite[Example 3.2 (1)]{Usui21_No02}, respectively.

%
%
\begin{example}  \label{example_3}
{\rm
Let $\L$, $\G$ and $\Sigma$ be the $k$-algebras given by the following quivers with relations
\begin{align*}
    \xymatrix{1 \ar@(ul,dl)_-{\alpha}} \qquad \quad  \alpha^2 = 0,
\end{align*}
\begin{align*}
    \xymatrix{1 \ar@(ul,dl)_-{\alpha}\ar[r]^-{\beta} &2} \qquad \quad  \alpha^2 = 0 = \beta \alpha
\end{align*}
and 
\begin{align*}
    \xymatrix{1 \ar@<0.5ex>[r]^-{\alpha} &2\ar@<0.5ex>[l]^-{\beta}} \qquad \quad  \alpha \beta \alpha =0,
\end{align*}
respectively.
Then $\L$ is a self-injective Nakayama and hence a periodic algebra (see \cite[Lemma in Section 4.2]{ErdHolm99}). 
Note that $\L$ has period $1$ if the characteristic of $k$ is $2$ and $2$ otherwise.
The remaining algebras $\G$ and $\Sigma$ are not Gorenstein since $\injdim_{\G}P_1=\infty=\injdim_{\Sigma}P_1$, where $P_i$ denotes a projective cover of the simple module associated to the vertex $i$.
By \cite[Example 7.5]{Skart_2016}, the algebras $\L$ and $\G$ are singularly equivalent of Morita type with level $1$, so that $\G$ is eventually periodic by Theorem \ref{claim_15}.
Letting $\tau$ be the Auslander-Reiten translation, one obtains $\Sigma = \End_\G(T)^{\rm op}$, where $T$ is the APR-tiling $\G$-module $T[2] = \tau^{-1} P_2 \oplus P_1$.
This implies that $\G$ is derived equivalent to $\Sigma$, and hence $\Sigma$ is eventually periodic by Corollary \ref{claim_17}. 
On the other hand, since $\G$ and $\Sigma$ are monomial algebras, using a result of Bardzell \cite{Bard97}, we have that $\Omega_{\Ge}^{3}(\G)$ and $\Omega_{\Sigma^{\rm e}}^{2}(\Sigma)$ are the first periodic syzygies.
Thus Theorem \ref{claim_15} implies that the periodic bimodules $\L$, $\Omega_{\Ge}^{3}(\G)$ and $\Omega_{\Sigma^{\rm e}}^{2}(\Sigma)$ have the same period.
}\end{example} 

\subsection{Eventually periodic Nakayama algebras} 
Throughout this subsection, we suppose that the  field $k$ is algebraically closed unless otherwise stated.
We denote by $\CN$ the class of finite dimensional connected Nakayama $k$-algebras (see \cite{ASS06,ARS95} for their definition).
The aim of this subsection is to decide which algebras from $\CN$ are eventually periodic. 
Let us first review some facts from \cite[Section V.3]{ASS06}.
Let $\L$ be in $\CN$ and $J(\L)$ its Jacobson radical.
The algebra $\L$ is Morita equivalent to a bound quiver algebra whose ordinary quiver is given by either
\[\xymatrix@C=7mm{
1\ar[r] & 2 \ar[r] & \cdots \ar[r]& e-1 \ar[r] & e
}\]
or 
\begin{center}
\begin{minipage}{0.05\columnwidth}
$Z_e :$
\end{minipage}
\begin{minipage}{0.5\columnwidth}
\xymatrix@C=4mm@R=4mm{
& 1\ar[r] & 2 \ar[rd] & \\
e \ar[ru] & & & 3 \ar[ld] \\ 
 & e-1 \ar[lu] & \cdots \ar[l] &
}
\end{minipage}
\end{center}
where $e \geq 1$.
Note that $\gldim \L < \infty$ for the first case.
Moreover, $\L$ is non-simple and self-injective if and only if it is Morita equivalent to the bound quiver algebra $k Z_{e} / R^{N}$ for some $e \geq 1$ and $N \geq 2$, where $R$ denotes the arrow ideal of the path algebra $k Z_e$.

Now, we show the following key lemma, which is a consequence of results due to Qin \cite{Qin_2021} and Shen \cite{Shen_2015,Shen_2020}.

%
%
\begin{lemma} \label{claim_35}   
Let $\L$ be a finite dimensional connected Nakayama $k$-algebra.
If the global dimension of $\L$ is infinite, then there exists an idempotent $f$ of $\L$ satisfying the following conditions.
\begin{enumerate}
    \item[(i)] $f \L f$ is a finite dimensional  connected self-injective Nakayama $k$-algebra.
    \item[(ii)] $\L$ and $f\L f$ are singularly equivalent of Morita type with level.
\end{enumerate}
\end{lemma}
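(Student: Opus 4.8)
The strategy is to pass from $\L$ to its basic algebra and then peel off the part that causes finite global dimension. Since singular equivalence of Morita type with level is a Morita-invariant notion (and Morita equivalences trivially give level-$0$ singular equivalences of Morita type), I may assume $\L = kQ/I$ is a bound quiver algebra in $\CN$. By the structure theory recalled above, $Q$ is either the linear $A_e$ quiver (in which case $\gldim\L<\infty$, excluded by hypothesis) or the cyclic quiver $Z_e$. So assume $Q = Z_e$ and $\gldim\L=\infty$. The idea is that a connected Nakayama algebra on a cyclic quiver decomposes, up to singular equivalence, into a self-injective Nakayama ``core'' together with a ``tail'' of finite global dimension; idempotent truncation at the vertices belonging to the core is the required $f$.

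\textbf{Key steps.} First I would invoke the results of Shen \cite{Shen_2015,Shen_2020} on the singularity category of a Nakayama algebra: when $\gldim\L=\infty$, the Kupisch series of $\L$ determines a distinguished set of vertices on which the ``stable part'' is concentrated, and removing the remaining vertices does not change the singularity category. Concretely, Shen shows that for $\L\in\CN$ with $\gldim\L=\infty$ there is an idempotent $f$ (a sum of primitive idempotents $e_i$ for $i$ in that distinguished vertex set) such that $f\L f\in\CN$ is self-injective; this gives (i). Second, for (ii) I would appeal to Qin's work \cite{Qin_2021}: Qin proves that for a Nakayama algebra $\L$ of infinite global dimension, the canonical recollement / idempotent truncation $\L \rightsquigarrow f\L f$ induces a singular equivalence, and in the Nakayama setting this singular equivalence is in fact of Morita type with level. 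The bimodules realizing it are $\L f$ and $f\L$ (or their syzygies), with the level $l$ read off from how far the projective resolution of $\L/\L f\L$ as a bimodule extends — finiteness of $\projdim_{\Le}(\L/\L f\L)$ is what makes the level finite, and this finiteness is exactly the statement that the truncated-away part has finite projective dimension. Combining, $({}_\L\L f_{f\L f},\, {}_{f\L f}f\L_\L)$ (after replacing by a suitable syzygy as in Lemma \ref{claim_32}'s argument) defines a singular equivalence of Morita type with level between $\L$ and $f\L f$, giving (ii).

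\textbf{Main obstacle.} The genuine content is not the existence of the idempotent $f$ with $f\L f$ self-injective — that is a combinatorial fact about Kupisch series and is essentially in Shen's papers — but rather verifying that the resulting singular equivalence is of \emph{Morita type with level}, i.e.\ that the connecting bimodules are one-sided projective and that a power of the syzygy of $\L$ as a bimodule factors as $\L f\otimes_{f\L f} f\L$. One-sided projectivity of ${}_\L\L f$ and $f\L_\L$ is immediate; the delicate point is the bimodule isomorphism in Definition \ref{def_3}(2), which requires controlling $\Omega_{\Le}^{l}(\L)$ and matching it with $\L f\otimes_{f\L f} f\L$ for an explicit $l$. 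Here I would lean on Qin's computation rather than redo it: the key input is that $\L/\L f\L$ has finite projective dimension over $\Le$ (equivalently, the ideal $\L f\L$ is ``stratifying'' in the relevant sense and the quotient is perfect as a bimodule), which forces the level to be finite and the required isomorphism to hold in $\Le\smod$. Once that is cited, the rest is bookkeeping analogous to the composition-of-levels computation in the Remark following Theorem \ref{claim_15}.
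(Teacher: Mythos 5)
Your overall strategy is the same as the paper's: obtain the idempotent $f$ with $f\L f$ connected self-injective Nakayama from Shen's work, and then upgrade the resulting idempotent truncation to a singular equivalence of Morita type with level by citing Qin. Part (i) and the one-sided projectivity of ${}_\L\L f$ and $f\L_\L$ are handled exactly as in the paper (via the proof of Proposition 3.8 in Shen's 2015 paper, which also gives that $\L f$ is projective over $(f\L f)^{\rm op}$).

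The gap is in what you propose to verify in order to invoke Qin. You identify the key input as $\projdim_{\Le}(\L/\L f\L)<\infty$, i.e.\ a stratifying-ideal/perfect-bimodule condition. That is not the hypothesis of the theorem of Qin that makes this work, and it is not what the paper checks; if you set out to verify finiteness of the bimodule projective dimension of $\L/\L f\L$ for these Nakayama algebras you would likely get stuck, and nothing in the cited literature hands it to you. What the paper actually does is a two-step reduction on the \emph{one-sided} level: first, Shen's Theorem 3.11 (2015) shows that $i_\lambda=\L f\otimes_{f\L f}-$ already induces a singular equivalence between $f\L f$ and $\L$; second, Shen's Theorem 4.2 (2020) converts this into the finiteness of $\injdim_{\L}\bigl((\L/\L f\L)/J(\L/\L f\L)\bigr)$; and it is this one-sided injective dimension condition (together with the one-sided projectivity of $\L f$) that feeds into Qin's Theorem 4.2 to produce the singular equivalence of Morita type with level. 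So the citation chain you need is Shen (2015) $\rightarrow$ Shen (2020) $\rightarrow$ Qin, with the intermediate condition being an injective dimension bound over $\L$ for the top of $\L/\L f\L$, not a projective dimension bound over $\Le$ for $\L/\L f\L$ itself. With that correction the rest of your outline goes through.
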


\begin{proof}
Assume that $\gldim \L = \infty$.
It follows from the proof of \cite[Proposition 3.8]{Shen_2015} that there exists an idempotent $f \in \L$ such that $f\L f$ is a  connected self-injective Nakayama $k$-algebra and such that $\L f$ is a projective $(f\L f)^{\rm op}$-module.  
Note that the exact functor $i_{\lambda}:=\L f \otimes_{f \L f}-: f\L f\mod \rightarrow \L\mod$ naturally induces a triangle functor, still denoted by $i_{\lambda}$, $D^{\rm b}(f \L f\mod) \rightarrow D^{\rm b}(\L\mod)$ preserving perfect complexes.
By \cite[Theorem 3.11]{Shen_2015}, the functor $i_{\lambda}:f \L f\mod \rightarrow \L\mod$ induces a singular equivalence between $f \L f$ and $\L$,  that is, the induced triangle functor $D_{\rm sg}(i_{\lambda}):D_{\rm sg}(f \L f) \rightarrow D_{\rm sg}(\L)$ is an equivalence.
Then \cite[Theorem 4.2]{Shen_2020} yields 
$\injdim_{\L}\frac{\L/ \L f \L}{J(\L/ \L f \L)} < \infty,$
so that one concludes from \cite[Theorem 4.2]{Qin_2021} that $\L$ and $f \L f$ are singularly equivalent of Morita type with level.
\end{proof}

%
%
\begin{rem}
{\rm
Let $k$ be a field.
The statement of Lemma \ref{claim_35} still holds for finite dimensional connected Nakayama $k$-algebras $\L$ such that the semisimple quotients $\L/J(\L)$ are separable over $k$. 
As such $k$-algebras, there are finite dimensional connected Nakayama $k$-algebras given by quivers with relations.
}\end{rem}

Using a result of  Asashiba \cite{Asa99}, we can classify $\CN$ up to singular equivalence of Morita type with level.
We notice that $\CN$ is not closed under singular equivalence of Morita type with level (see Example \ref{example_3}).

%
%
\begin{theo}\label{claim_33}  
The algebras $k$ and $k Z_{e} / R^{N}$ with $e\geq 1$ and $N\geq 2$ form  a complete set of representatives of pairwise different equivalence classes  of finite dimensional connected Nakayama $k$-algebras under singular equivalence of Morita type with level. 
\end{theo}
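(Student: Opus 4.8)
The plan is to establish the statement in two halves: a \emph{completeness} claim, that every algebra in $\CN$ is singularly equivalent of Morita type with level to one of the listed algebras, and a \emph{distinctness} claim, that no two of the listed algebras are so equivalent.

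For completeness, let $\L\in\CN$ and separate two cases according to whether $\gldim\L$ is finite. If $\gldim\L<\infty$, then also $\projdim_{\Le}\L<\infty$ (here one uses that $k$ is algebraically closed, hence perfect, so that finite global dimension of $\L$ forces finite projective dimension of $\L$ over $\Le$); since trivially $\projdim_{k}k=0$, Lemma \ref{claim_31} shows that $\L$ and $k$ are singularly equivalent of Morita type with level. If $\gldim\L=\infty$, then Lemma \ref{claim_35} produces an idempotent $f\in\L$ with $f\L f$ a finite dimensional connected self-injective Nakayama $k$-algebra that is singularly equivalent of Morita type with level to $\L$. By the structure theory of Nakayama algebras recalled above, $f\L f$ is Morita equivalent to $kZ_{e}/R^{N}$ for some $e\geq1$ and $N\geq1$; the value $N=1$ is excluded, as it would give $\D_{\rm sg}(f\L f)=0$, hence $\D_{\rm sg}(\L)=0$ by the induced triangle equivalence (Remark \ref{rem_2}), i.e.\ $\gldim\L<\infty$, contrary to assumption; so $N\geq2$. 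Finally, a Morita equivalence is in particular a stable equivalence of Morita type, i.e.\ a singular equivalence of Morita type with level $0$, and combining this with the previous equivalence via transitivity of the relation (established above) shows that $\L$ is singularly equivalent of Morita type with level to $kZ_{e}/R^{N}$.

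For distinctness, first note $\D_{\rm sg}(k)=0$, whereas for $N\geq2$ the algebra $kZ_{e}/R^{N}$ is non-semisimple self-injective and therefore has infinite global dimension, so that $\D_{\rm sg}(kZ_{e}/R^{N})\neq0$; since a singular equivalence of Morita type with level induces a triangle equivalence of singularity categories (Remark \ref{rem_2}), $k$ cannot be singularly equivalent of Morita type with level to any $kZ_{e}/R^{N}$. It then remains to separate the algebras $kZ_{e}/R^{N}$ among themselves. As these are finite dimensional self-injective, Lemma \ref{claim_32} reduces singular equivalence of Morita type with level between two of them to stable equivalence of Morita type, which in turn implies ordinary stable equivalence. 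I would then invoke the classification of representation-finite self-injective algebras up to stable equivalence due to Riedtmann and Asashiba \cite{Asa99}: Nakayama algebras over an algebraically closed field are standard and representation finite, so $kZ_{e}/R^{N}$ has a well-defined Asashiba type, and one checks that this type recovers the Loewy length $N$ (via the underlying Dynkin diagram $A_{N-1}$) and the number $e$ of simple modules. Consequently $kZ_{e}/R^{N}$ and $kZ_{e'}/R^{N'}$ are stably equivalent only if $(e,N)=(e',N')$, which finishes the argument.

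The step I expect to be the main obstacle is this last one: pinning down Asashiba's invariant for the self-injective Nakayama algebra $kZ_{e}/R^{N}$ precisely enough to conclude that it determines the pair $(e,N)$. The rest is bookkeeping, assembling Lemmas \ref{claim_31}, \ref{claim_32} and \ref{claim_35} together with the known structure of connected Nakayama algebras.
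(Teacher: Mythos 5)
Your proposal is correct and follows essentially the same route as the paper: split into completeness and distinctness, handle the finite global dimension case via Lemma \ref{claim_31} (the paper cites Happel for $\gldim\L=\projdim_{\Le}\L$, you use perfectness of $k$ — both fine), handle the infinite case via Lemma \ref{claim_35} plus the structure of connected self-injective Nakayama algebras, and separate the $kZ_e/R^N$ via Lemma \ref{claim_32} and Asashiba's classification of representation-finite self-injective algebras up to stable equivalence. The final step you flag as a possible obstacle is exactly what the paper resolves by citing Asashiba's theorem, whose type invariant for $kZ_e/R^N$ indeed determines the pair $(e,N)$.
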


\begin{proof}
Let $\L$ be in $\CN$.
Assume that $\gldim\L < \infty$.
Since $\gldim\L =\projdim_{\Le}\L$ by \cite[Section 1.5]{Happel89}, Lemma \ref{claim_31} implies that $\L$ and $k$ are singularly equivalent of Morita type with level.
On the other hand, if $\gldim\L = \infty$, then Lemma \ref{claim_35} implies that there exist integers $e \geq 1$ and $N \geq 2$ such that  $\L$ and $k Z_{e} / R^{N}$ are singularly equivalent of Morita type with level.

We claim that $k Z_{e} / R^{N}$ and $k Z_{e^\prime} / R^{N^\prime}$ are singularly equivalent of Morita type with level if and only if $e = e^\prime$ and $N= N^\prime$.
Assume that $k Z_{e} / R^{N}$ and $k Z_{e^\prime} / R^{N^\prime}$ are singularly equivalent of Morita type with level.
Then these are stably equivalent of Morita type by Lemma \ref{claim_32}.
Since $k Z_{e} / R^{N}$ and $k Z_{e^\prime} / R^{N^\prime}$ are representation-finite self-injective algebras, \cite[Theorem in Section 2.2]{Asa99} implies that $e = e^\prime$ and $N= N^\prime$. 
The converse is clear.

Since $\gldim\L<\infty$ if and only if $\D_{\rm sg}(\L)=0$, there exists no singular equivalence of Morita type with level between $k$ and  $k Z_{e} / R^{N}$ for all $e \geq 1$ and  $N \geq 2$.
This completes the proof.
\end{proof}

It was proved by Erdmann and Holm \cite[Lemma in Section 4.2]{ErdHolm99} that $k Z_{e} / R^{N}$ is periodic for all $e \geq 1$ and $N \geq 2$. 
Hence Theorems \ref{claim_15} and  \ref{claim_33} imply that an algebra $\L$ in $\CN$ is eventually periodic if and only if $\gldim\L= \infty$.

For each $\L \in \CN$, its {\it resolution quiver} $R(\L)$ can be used to check whether $\gldim\L$ is finite or not.  
Let us recall some related notions and facts from \cite{ARS95,Ringel_2013,Shen_2017}. 
Let $\L$ be in $\CN$, and let $\{T_1, T_2, \ldots, T_e\}$ be a complete set of pairwise non-isomorphic simple $\L$-modules such that $J(\L) P_i$ is a quotient of $P_{i+1}$ for $1 \leq i < e$ and $J(\L) P_e$ is a quotient of $P_{1}$, where $P_i$ is a projective cover of $T_i$. 
We denote by $c_i$ the composition length of  $P_i$.
The sequence $(c_1, c_2, \ldots, c_e)$ is called the {\it admissible sequence} for $\L$ (see \cite[Section IV.2]{ARS95} for more details).
Observe that $\L$ has a simple projective module if and only if $c_e = 1$. 
In this case, the global dimension of $\L$ is always finite. 
Following \cite{Gustafson_1985,Shen_2017}, we define a map $f_{\L}: \{1, 2, \ldots, e\} \rightarrow \{1, 2, \ldots, e\}$ such that  $f_{\L}(i) -(c_i+i)$ is divided by $e$.
The {\it resolution quiver} $R(\L)$ of $\L$ is defined in the following way: the vertices are $1, 2, \ldots, e$, and there exists an arrow from $i$ to $f_{\L}(i)$. 
We remark that $R(\L)$ is the same as that in \cite{Shen_2017}.
By definition, each connected component of $R(\L)$ contains a unique cycle (see \cite[page 243]{Ringel_2013}).
For a cycle $C$ in $R(\L)$ with vertices $T_1, T_2, \ldots, T_r$, we define the {\it weight} of $C$ by $w(C) := \sum_{i=1}^{r} c_i / e$.
Then \cite[the proof of Lemma 2.2]{Shen_2014} and \cite[Proposition 1.1]{Shen_2014} show that $w(C)$ is an integer and that all cycles in $R(\L)$ have the same weight, respectively.
For this reason,  $w(\L):= w(C)$ is called the {\it weight} of $\L$.

It is known that $R(\L)$ characterizes some homological properties on the algebra $\L$. 
For instance, Ringel \cite{Ringel_2013} used  $R(\L)$ to determine whether $\L$ is Gorenstein or not and  whether it is CM-free or not; and Shen \cite{Shen_2017} used $R(\L)$ to determine  whether the global dimension of $\L$ is finite or not.
Recall from \cite[Proposition 1.1]{Shen_2017} that $\gldim\L<\infty$ if and only if $R(\L)$ is connected and $w(\L)=1$. 
Therefore, we obtain the following corollary, which is the main result of this subsection.

%
%
\begin{cor} \label{claim_30}   
Let $\L$ be a finite dimensional connected Nakayama $k$-algebra, where $k$ is an algebraically closed field. 
Then the following conditions are equivalent.
\begin{enumerate}
    \item $\L$ is eventually periodic.
    \item The global dimension of $\L$ is infinite.
    \item The resolution quiver of $\L$ is not connected, or the weight of $\L$ is not equal to $1$.
\end{enumerate}
\end{cor}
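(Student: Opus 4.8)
The plan is to treat the equivalence $(2)\Leftrightarrow(3)$ as essentially a citation — it is the contrapositive of Shen's criterion \cite[Proposition 1.1]{Shen_2017}, recalled in the paragraph preceding the corollary, which says that $\gldim\L<\infty$ if and only if the resolution quiver $R(\L)$ is connected and $w(\L)=1$ — and then to concentrate on proving $(1)\Leftrightarrow(2)$. For the latter I would assemble Theorem \ref{claim_33}, the periodicity of the algebras $kZ_{e}/R^{N}$ due to Erdmann--Holm, and the transfer result Theorem \ref{claim_15}.

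For $(1)\Rightarrow(2)$: if $\L$ is eventually periodic then, by Remark \ref{rem_1}, the $\Le$-module $\L$ has infinite projective dimension, that is, $\projdim_{\Le}\L=\infty$. Since $k$ is algebraically closed, the semisimple quotient $\L/\rad\L$ is separable over $k$, so $\gldim\L=\projdim_{\Le}\L$ by \cite[Section 1.5]{Happel89}; hence $\gldim\L=\infty$. For $(2)\Rightarrow(1)$: assume $\gldim\L=\infty$. Then Theorem \ref{claim_33} provides integers $e\geq 1$ and $N\geq 2$ such that $\L$ and $kZ_{e}/R^{N}$ are singularly equivalent of Morita type with level. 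By Erdmann--Holm \cite[Lemma in Section 4.2]{ErdHolm99} the algebra $kZ_{e}/R^{N}$ is periodic, in particular eventually periodic, so Theorem \ref{claim_15} forces $\L$ to be eventually periodic as well (and, as a byproduct, the period of a periodic syzygy of $\L$ over $\Le$ equals the period of $kZ_{e}/R^{N}$).

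I do not expect a genuine obstacle here: the corollary is an assembly of Theorem \ref{claim_15}, Theorem \ref{claim_33} and Shen's criterion, and every ingredient has already been set up. The one point that merits care is the step $(1)\Rightarrow(2)$, namely the identity $\gldim\L=\projdim_{\Le}\L$, which requires separability of $\L/\rad\L$; this is precisely where the standing hypothesis that $k$ be algebraically closed is used beyond its role in Theorem \ref{claim_33}. (Over a general field one would instead impose, as in the remark following Lemma \ref{claim_35}, that $\L/\rad\L$ be separable.) With that understood, the three implications chain together with no computation.
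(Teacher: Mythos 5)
Your proposal is correct and follows the same route as the paper, which obtains the corollary by combining Shen's criterion for $(2)\Leftrightarrow(3)$ with Theorem \ref{claim_33}, the Erdmann--Holm periodicity of $kZ_e/R^N$, and Theorem \ref{claim_15} for $(2)\Rightarrow(1)$. Your direct derivation of $(1)\Rightarrow(2)$ via Remark \ref{rem_1} and Happel's identity $\gldim\L=\projdim_{\Le}\L$ is only a cosmetic variation, since the paper's own argument (passing through the classification) rests on the same identity inside the proof of Theorem \ref{claim_33}.
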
  

We end this subsection by providing examples of eventually periodic Nakayama algebras.

%
%
\begin{example} 
{\rm
\begin{enumerate}
\item \label{example_1}
Let $\L$ be a connected Nakayama algebra whose admissible sequence is given by $(13,13,12,12,12)$. 
Note that it can be found in \cite[Example in Introduction]{Ringel_2013}.
The resolution quiver  of $\L$ is as follows.
\begin{align*}
R(\L):\quad\xymatrix@C=10mm{
1 \ar@<0.6ex>[r] & 4 \ar@<0.6ex>[l] & 3 \ar[r] &5 \ar@<0.6ex>[r] & 2 \ar@<0.6ex>[l] 
}\end{align*}
Since $R(\L)$ is disconnected, the algebra $\L$ is eventually periodic by Corollary \ref{claim_30}.

\item  \label{example_2}
Let $\L^\prime$ be a connected Nakayama algebra with admissible sequence $(7,6,6,5)$, which appears in \cite[Example 5.4]{Shen_2015}.
The resolution quiver of $\L^\prime$ is given by
\begin{align*}
R(\L^\prime):\quad\xymatrix@C=10mm{
3 \ar[r] & 1 \ar@<0.6ex>[r] & 4 \ar@<0.6ex>[l] & 2 \ar[l]
}\end{align*}
Since $w(\L^\prime) = 3 \not= 1$, Corollary \ref{claim_30} shows that $\L^\prime$ is eventually periodic.
\end{enumerate} 
}\end{example} 


\section*{Acknowledgments}  
The author would like to express his appreciation to the referee for reading the manuscript of the paper carefully and for valuable comments.
The author would like to thank Professor Katsunori Sanada, Professor Ayako Itaba and Professor Tomohiro Itagaki for their helpful discussions and for their valuable suggestions and comments for improving the manuscript of the paper.

\bibliographystyle{plain}

\bibliography{ref}

\end{document}